\newtheorem{defi}{Definition}
\newtheorem{theorem}[defi]{Theorem}
\newtheorem{rem}[defi]{Remark}
\newtheorem{prop}[defi]{Proposition}
\newcommand{\R}{\mathbb{R}}
\newcommand{\eps}{\varepsilon}
\newcommand{\ve}{\varepsilon}
\newcommand{\vf}{\varphi}
\newcommand{\nd}{\partial_\nu}
\newcommand{\dtn}{\mathcal{D}}
\def\<{{\langle}}
\def\>{{\rangle}}
\DeclareMathOperator*{\grad}{grad}
\DeclareMathOperator*{\dt}{dt}
\DeclareMathOperator*{\St}{S}
\DeclareMathOperator*{\Ne}{N}
\DeclareMathOperator*{\Vol}{Vol}
\let\oldmarginnote\marginnote
\renewcommand{\marginnote}[1]
{\oldmarginnote{
\vspace{-.3cm}\blue{\tiny
\begin{spacing}{1}#1
\end{spacing}}}
}
\title[Large Steklov eigenvalues]{Large Steklov eigenvalues under volume constraints}
\author{Alexandre Girouard}
\address{D\'epartement de math\'ematiques et de statistique, Pavillon Alexandre-Vachon, Universit\'e Laval, Qu\'ebec, QC, G1V 0A6, Canada}
\email{alexandre.girouard@mat.ulaval.ca}
\author{Panagiotis Polymerakis}
\address{Department of Mathematics, University of Thessaly,
3rd km Old National Road Lamia – Athens, 35100, Lamia, Greece.}
\email{ppolymerakis@uth.gr}
\date{\today}
\begin{document}
\begin{abstract}
In this note we establish an expression for the Steklov spectrum of warped products in terms of auxiliary Steklov problems for drift Laplacians with weight induced by the warping factor.
As an application, we show that a compact manifold with connected boundary diffeomorphic to a product admits a family of Riemannian metrics which coincide on the boundary, have fixed volume and arbitrarily large first non-zero Steklov eigenvalue. These are the first examples of Riemannian metrics with these properties on three-dimensional manifolds. 
\end{abstract}

\maketitle

\section{\bf Introduction}
Let $(M,g_M)$ be a smooth connected compact Riemannian manifold of dimension $m\geq 2$ with boundary $\Sigma=\partial M$, and consider the Laplace operator $\Delta$ be acting on smooth functions on $M$.
The Dirichlet-to-Neumann map $\dtn_\Delta:C^\infty(\Sigma)\to C^\infty(\Sigma)$ associated to $\Delta$ is defined by
$\dtn_\Delta f=\nd{\bar{f}}$, where $\nu$ is the outward normal  along the boundary $\Sigma$ and where the function $\bar{f}\in C^\infty(M)$ is the unique harmonic extension of $f$ to the interior of $M$.
Because it is an elliptic pseudodifferential operator, $\dtn_\Delta$ is essentially self-adjoint. Its closure (which we also denote $\dtn_\Delta$) has non-negative, unbounded discrete spectrum $\sigma_{\dtn_\Delta}(M)$:
  $$0=\sigma_0(M,g_M)<\sigma_1(M,g_M)\leq\sigma_2(M,g_M)\leq\cdots\to\infty,$$
  where each eigenvalue is repeated according to its multiplicity.
  The eigenvalues of $\dtn_{\Delta}$ are known as \emph{Steklov eigenvalues} of $M$. Their interplay with the geometry of $M$ is an active area of investigation. See \cite{GiPo2017,CoGiGoShe2022} for surveys.

The general question that motivates this paper is to know how large the spectral gap $\sigma_1(M,g)>0$ can be for metrics $g$ that are prescribed on the boundary $\partial M$. More precisely, let $g_M$ be the Riemannian metric of $M$. The question is to compute, or estimate, the following quantity:
  \begin{gather}\label{eq:sup}
    \sup\{\sigma_1(M,g)\,:\,g=g_M\text{ on }\partial M\}.
  \end{gather}

For surfaces, this question has been investigated for a long time, starting with the work of Weinstock~\cite{We1954} who considered simply-connected planar domains, and subsequently by many authors, leading in particular to the following upper bound by Kokarev~\cite{Ko2014}, in terms of the genus $\gamma$ of an orientable compact surface $M$ and the length $L_g(\partial M)$ of its boundary:
  \begin{gather}\label{ineq:Kokarev}
    \sigma_1(M,g)L_g(\partial M)\leq 8\pi(\gamma+1).
  \end{gather}
  For surfaces, the determination of the exact value of the supremum~\eqref{eq:sup} has been the subject of intense activity over the last ten years. See~\cite[Section 3]{CoGiGoShe2022} for the state of the art, circa 2022.

The situation for surfaces is atypical. 
  Indeed, on any compact manifold $M$ of dimension at least 3, there exists a family  of positive smooth functions $(\omega_\eps)_{\eps>0}\subset C^{\infty}(M)$ such that $\omega_\eps\equiv 1$ on $\partial M$ and the conformal metrics $g_\eps=\omega_{\eps}g_M$ satisfy
  $$\sigma_1(M,g_\eps)\xrightarrow{\eps\to0}+\infty.$$
  This was proved by Colbois, El Soufi and Girouard~\cite{CoElGi2019}.  It follows from work of Fraser and Schoen~\cite{FrSc2011} that any such family $g_\eps$ must satisfy
  $$\text{Vol}(M,g_\eps)\xrightarrow{\eps\to 0}+\infty.$$
  Indeed, they proved that any conformal metric $g\in[g_M]$ satisfies
  $$\sigma_1(M,g)\text{Vol}_g(\partial M)\leq C([g_M])\text{Vol}_g(M,g)^{\frac{n-2}{n}},$$
  where $C([g_M])$ is a finite conformal invariant, which they call the \emph{relative conformal volume}, in reference to the classical conformal volume of Li and Yau~\cite{LiYa1982}.  See also the work of Hassannezhad~\cite{Ha2011} for eigenvalues $\sigma_k$ of arbitrary index.
  This raises the question of how large $\sigma_1$ could be among all metrics $g$ of unit volume that coincide with $g_M$ on the boundary $\partial M$, without the conformal constraint. In other words, the question is to compute
  $$S(M,g_M):=\sup\{\sigma_1(M,g)\,:\,g=g_M\text{ on }\partial M\,\text{ and }\,\Vol(M,g)=1\}.$$
  \begin{rem}
    The requirement that $g=g_M$ on the boundary of $M$ is restrictive. Indeed, recall that any closed manifold $\Sigma$ of dimension $d\geq 3$ admits Riemannian metrics $(g_\eps)_{\eps>0}$ of unit volume such that the first non-zero eigenvalue of the Laplace operator satisfies $\lambda_1(\Sigma,g_\eps)\xrightarrow{\eps\to 0}+\infty$. This was proved by Colbois and Dodziuk in~\cite{CoDo1994}. It follows from separation of variables, that $M=[0,1]\times \Sigma$ equipped with the direct sum metric $dt^2+g_\eps$ satisfies
    $\sigma_1(M,g_\eps)\xrightarrow{\eps\to0}+\infty$. These metrics have unit volume, however they are not prescribed on the boundary of $M$.
  \end{rem}  
  For manifolds $M$ of dimension at least 4 with boundary $\partial M$ that admits a Killing vector field whose dual 1-form $\xi$ has nowhere vanishing exterior derivative $d\xi$, Cianci and Girouard~\cite{MR3878841} constructed a family of metrics $g_\eps$ of unit volume which satisfy $\sigma_1(M,g_\eps)\xrightarrow{\eps\to0}+\infty$. In other words, these manifolds satisfy $S(M,g_M)=+\infty$. The proof is inspired by work of Bleecker~\cite{MR678360}, establishing the existence of Riemannian metrics with fixed volume and arbitrarily large spectral gap on any closed manifold of dimension at least three, admitting a Killing vector field satisfying the above condition.
Because the construction in~\cite{MR3878841} is adapted from that of Bleecker on the boundary of the manifold, the dimension of the boundary needs to be at least 3 in this construction.

The purpose of this paper is to present two new families of examples where $S(M,g)=+\infty$. Our method provides the first three-dimensional examples, but also higher-dimensional examples that are different from those of~\cite{MR3878841}.
  In our first result, not only the volume $\text{Vol}(M,g_M)$ is preserved, but the volume element $dV_{g_M}$ is also preserved.
\begin{theorem}\label{thm:product}
Let $(M,g_M)$ be the Riemannian product of a compact manifold $B^n$ with boundary and a closed manifold $F^k$, where $n > k\geq 1$. Then for any $\ve > 0$ there exists a Riemannian metric $g_\ve$ on $M$ that coincides with $g_M$ in a neighborhood of $\partial M$, and satisfies $dV_{g_\eps}=dV_{g_M}$ in $M$ and $\sigma_1(M,g_{\ve}) \rightarrow + \infty$ as $\ve \rightarrow 0$.
\end{theorem}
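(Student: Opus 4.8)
The plan is to take $g_\ve$ to be a warped product over $B$ with shrinking fibres, and to read off $\sigma_1(M,g_\ve)$ from the auxiliary problems of the warped-product decomposition established above.

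\textbf{Step 1 (the ansatz).} Fix $t\in C^\infty(B)$ equal to the distance to $\partial B$ near $\partial B$. For small $\ve>0$ I would choose a Riemannian metric $h_\ve$ on $B$ and a function $f_\ve\in C^\infty(B)$, $f_\ve>0$, with $h_\ve=g_B$ and $f_\ve\equiv 1$ on a \emph{shrinking} collar $\{t<\rho_\ve\}$ (with $\rho_\ve\to 0$), and set
$$g_\ve:=h_\ve+f_\ve^{2}g_F\ \text{ on }\ M=B\times F,\qquad\text{subject to }\ f_\ve^{\,k}\,dV_{h_\ve}=dV_{g_B}.$$
The constraint forces $dV_{g_\ve}=f_\ve^{k}\,dV_{h_\ve}\,dV_{g_F}=dV_{g_M}$, and $g_\ve=g_M$ on the neighbourhood $\{t<\rho_\ve\}\times F$ of $\partial M$, so every admissible pair $(h_\ve,f_\ve)$ meets the structural requirements of the statement; note also that it couples the size of $f_\ve$ to the volume density of $h_\ve$, so that shrinking the fibres ($f_\ve$ small) forces the base to be inflated, and conversely. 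All the work is in choosing $(h_\ve,f_\ve)$ so that $\sigma_1\to\infty$.

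\textbf{Step 2 (reduction).} Let $0=\mu_0<\mu_1\le\cdots$ be the eigenvalues of the Laplacian of $(F,g_F)$, with an orthonormal eigenbasis $\{\psi_j\}$. Writing $u=\sum_j u_j(x)\psi_j(y)$ and using $f_\ve^{k}\,dV_{h_\ve}=dV_{g_B}$, the Dirichlet energy and the boundary $L^2$-norm decouple:
$$\int_M|\nabla u|_{g_\ve}^{2}\,dV_{g_\ve}=\sum_{j\ge0}\int_B\bigl(|\nabla u_j|_{h_\ve}^{2}+\mu_j f_\ve^{-2}u_j^{2}\bigr)\,dV_{g_B},\qquad \int_{\partial M}u^{2}\,dA_{g_\ve}=\sum_{j\ge0}\int_{\partial B}u_j^{2}\,dA_{g_B}.$$
Since the constraint $\int_{\partial M}u=0$ concerns only $u_0$, the variational characterisation of $\sigma_1$ gives
$$\sigma_1(M,g_\ve)=\min\bigl\{\,\sigma_1^{(0)}(\ve),\ \Lambda_\ve(\mu_1)\,\bigr\},$$
where $\sigma_1^{(0)}(\ve)$ is the first nonzero eigenvalue of the weighted Steklov problem for $u\mapsto\int_B|\nabla u|_{h_\ve}^{2}\,dV_{g_B}$ on $(B,h_\ve)$ (boundary measure $dA_{g_B}$), and $\Lambda_\ve(\mu)=\inf\{\int_B(|\nabla u|_{h_\ve}^{2}+\mu f_\ve^{-2}u^{2})\,dV_{g_B}:\int_{\partial B}u^{2}\,dA=1\}$; as $\Lambda_\ve$ is nondecreasing in $\mu$, only $\mu_1$ survives. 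Two necessary conditions are visible at once: testing $\Lambda_\ve(\mu_1)$ against $u\equiv 1$ forces $\int_B f_\ve^{-2}\,dV_{g_B}\to\infty$ (the fibres must genuinely shrink), while cutting off boundary data inside the collar gives $\sigma_1^{(0)}(\ve)\lesssim\rho_\ve^{-1}$, so the collar must shrink. It therefore suffices to choose $(h_\ve,f_\ve)$ realising both $\sigma_1^{(0)}(\ve)\to\infty$ and $\Lambda_\ve(\mu_1)\to\infty$.

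\textbf{Step 3 (the choice and the main obstacle).} I would build $h_\ve$ to be strongly \emph{anisotropic} past the thin collar: the directions tangent to the level sets of $t$ are metrically small there (hence very expensive for the form $\int|\nabla u|_{h_\ve}^{2}\,dV_{g_B}$), while the remaining freedom is spent making $\det h_\ve$ large, i.e.\ $f_\ve=(\det g_B/\det h_\ve)^{1/(2k)}$ small with $\int_B f_\ve^{-2}\,dV_{g_B}\to\infty$. Heuristically: the oscillating part of any boundary datum must either propagate into the bulk through the expensive directions or be absorbed across the collar at cost $\gtrsim\rho_\ve^{-1}$, giving $\sigma_1^{(0)}(\ve)\to\infty$; and the constant part must either live in the bulk, where the potential $\mu_1 f_\ve^{-2}$ is large, or be dissipated there, again at a divergent cost, giving $\Lambda_\ve(\mu_1)\to\infty$. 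The real difficulty—and where the hypothesis $n>k$ is used—is that $\det h_\ve$ large is inseparable from the existence of a ``cheap'' direction of $h_\ve$, along which a function might hope to evade both mechanisms; having $n-1\ge k$ directions transverse to $\nabla t$ gives exactly the room to keep those directions expensive while still driving $f_\ve$ polynomially small in $\rho_\ve$, and to position the one cheap direction so that reaching it already forces a divergent potential cost. Turning these heuristics into genuine lower bounds—calibrating $\rho_\ve$ against the anisotropy and the warping, proving a trace inequality across the collar of width $\rho_\ve$ with the sharp $\rho_\ve^{-1}$ dependence, and splitting boundary data à la Poincaré into a constant part (controlled by the $f_\ve^{-2}$-potential term) and an oscillating part (controlled by the anisotropic Dirichlet term)—is the main technical obstacle; once it is in place, the decomposition of Step 2 yields $\sigma_1(M,g_\ve)\to\infty$.
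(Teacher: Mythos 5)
Your Steps 1 and 2 reproduce the paper's framework: the ansatz $g_\ve=h_\ve+f_\ve^2g_F$ with $f_\ve^k\,dV_{h_\ve}=dV_{g_B}$ (the paper takes the conformal choice $h_\ve=h^{-2k/n}g_B$, $f_\ve=h$), the separation of variables, and the identity $\sigma_1(M,g_\ve)=\min\{\sigma_1^{(0)}(\ve),\Lambda_\ve(\mu_1)\}$ are exactly Theorem~\ref{mixed Steklov-Neumann spectrum warped product} together with the monotonicity of $\Lambda_\ve(\mu)$ in $\mu$, as used in Proposition~\ref{aux result}. The genuine gap is Step 3, which is where all the work lies and which you explicitly leave unresolved; moreover the design you sketch would fail. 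You place the small-fiber region (large potential $\mu_1 f_\ve^{-2}$) in the bulk, with the tangential directions of $h_\ve$ shrunk and the normal direction inflated so that $\det h_\ve$ is large. But the volume constraint forces the normal coefficient $b$ to satisfy $b\gtrsim f_\ve^{-2k}$ wherever the potential has size $P=f_\ve^{-2}$, and a test function of $t$ alone, equal to $1$ up to that region and decaying to $0$ across a slab of width $w$ inside it, has energy roughly $\mu_1\Vol(\mathrm{collar})+Pw+(bw)^{-1}\le C+2\sqrt{P/b}\le C+2P^{(1-k)/2}$, which stays bounded for $k\ge1$. So $\Lambda_\ve(\mu_1)$ does not diverge: co-locating the large potential with a cheap \emph{normal} direction is precisely the escape route you must forbid, and since small $f_\ve$ forces some direction of $h_\ve$ to be cheap, the cheap directions in the small-fiber region must be tangential --- the opposite of the anisotropy you propose (and in fact no anisotropy is needed at all).

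The paper's actual mechanism is quantitative and isotropic: the fiber-shrinking is concentrated on a thin shell $\Sigma\times[\ve,2\ve]$ next to the boundary, the base is rescaled conformally there so the gradient weight is $h^{2k/n}=\ve^{\delta}$ (cheap, but with $\delta<1$ not too cheap), and the dichotomy runs: each Fourier mode either drops by half across $[0,2\ve]$, costing at least of order $\ve^{\delta-1}$ by Cauchy--Schwarz, or survives on $[\ve,2\ve]$ and pays the potential, of order $\lambda_1(F)\,\ve^{1-\delta n/k}$; both diverge exactly when $k/n<\delta<1$, and such a $\delta$ exists iff $n>k$. This is where the dimension hypothesis enters --- not through any count of directions transverse to $\nabla t$. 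Deeper inside, the weight is made huge ($\ve^{-2}$) to force the oscillating boundary modes to pay, giving $\sigma_1^{(0)}(\ve)\to\infty$ (Proposition~\ref{diffusion large first eigenvalue}), and when $\partial B$ is disconnected the locally constant modes require a separate argument via the Neumann eigenvalue of the bulk, a case your sketch does not address; finally a quasi-isometry (Proposition~\ref{quasi-isometric}, as in Theorem~\ref{thm:product mixed}) reduces to the case of an exact product collar. None of these quantitative choices or lower bounds appear in your Step 3, so the proposal does not yet prove the theorem.
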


It is noteworthy that the assumption on the dimension of the factors is essential. For instance, if $B$ is an interval and $F$ is a circle, then the product manifold $M=B \times F$ is a cylinder, and hence, it follows from Kokarev's inequality~\eqref{ineq:Kokarev} that $\sigma_1(M,g)L_g(\partial M)\leq 8\pi$.

  By establishing extensions of Theorem~\ref{thm:product} involving the mixed Steklov-Neumann spectrum, we are also able to study compact Riemannian manifolds with connected boundary isometric to a Riemannian product. In this setting, we show a result similar to Theorem \ref{thm:product}, where the Riemannian metrics have fixed volume instead of fixed volume element.
  
\begin{theorem}\label{thm:ProductBoundary}
Let $(M,g_M)$ be a compact Riemannian manifold with connected boundary $\partial M$  that is isometric to the product of two closed Riemannian manifolds of positive dimensions. Then for any $\ve > 0$ there exists a Riemannian metric $g_\ve$ on $M$ which coincides with $g_M$ on $\partial M$, satisfying $\Vol(M,g_\ve) = \Vol(M,g_M)$ and $\sigma_1(M,g_{\ve}) \rightarrow + \infty$ as $\ve \rightarrow 0$.
\end{theorem}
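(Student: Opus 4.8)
\section*{Proof proposal for Theorem~\ref{thm:ProductBoundary}}

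The plan is to reduce Theorem~\ref{thm:ProductBoundary} to Theorem~\ref{thm:product}, via its mixed Steklov--Neumann refinement, by gluing onto $M$ an auxiliary warped-product piece along the boundary. Write $\partial M\cong(N_1,h_1)\times(N_2,h_2)$ with both factors of positive dimension, and assume without loss of generality $\dim N_1\geq\dim N_2\geq 1$. Form $\widehat M:=M\cup_{\partial M}\bigl(\partial M\times[0,1]\bigr)$, identifying $\partial M$ with $\partial M\times\{0\}$; only a collar has been attached, so $\widehat M$ is diffeomorphic to $M$, and the new boundary $\partial M\times\{1\}$ will carry the prescribed metric $h_1+h_2$. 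Regard the attached collar as $C:=(N_1\times[0,1])\times N_2$, a Riemannian product $B\times F$ with $B:=N_1\times[0,1]$ and $F:=N_2$; since $\dim B=\dim N_1+1>\dim N_2=\dim F$, the warped-product construction behind Theorem~\ref{thm:product} applies to $C$, the only new feature being that the two components $N_1\times\{0\}$ and $N_1\times\{1\}$ of $\partial B$ play different roles — $N_1\times\{1\}$ will be Steklov, $N_1\times\{0\}$ will be glued to $M$.

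On $C$ I would take $g_\ve$ of warped-product type, $g_\ve|_C=g_{B,\ve}+\phi_\ve^2\,h_2$, with $\phi_\ve\colon B\to(0,\infty)$ and $g_{B,\ve}$ chosen as in the proof of Theorem~\ref{thm:product}, subject to the boundary normalization $\phi_\ve\equiv 1$ and $g_{B,\ve}\equiv dt^2+h_1$ on $N_1\times\{1\}$ — so that $g_\ve$ induces $h_1+h_2$ on $\partial\widehat M$ — and to the gluing normalization that $g_\ve|_C$ be a fixed product metric near $N_1\times\{0\}$, so that it extends smoothly across the now-interior hypersurface $\partial M$ by a metric on the $M$-part (the original boundary being interior in $\widehat M$, we are free to modify $g_M$ near it). By the mixed Steklov--Neumann extension of the warped-product spectral formula, the mixed problem on $(C,g_\ve)$ with Steklov condition on $N_1\times\{1\}\times N_2$ and Neumann condition on $N_1\times\{0\}\times N_2$ decomposes, over the eigenvalues $\{\lambda_j\}$ of $\Delta_{h_2}$ on $N_2$, into the mixed Steklov--Neumann problems on $B$ for the drift Laplacians $\Delta_{\phi_\ve^{k}}+\lambda_j\phi_\ve^{-2}$ with weight $\phi_\ve^{k}$, $k=\dim N_2$. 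The modes $j\geq 1$ are handled by the potential $\lambda_j\phi_\ve^{-2}$, which the construction forces to diverge; the mode $j=0$ is the mixed Steklov--Neumann eigenvalue problem for the drift Laplacian on $B$ alone, whose first nonzero eigenvalue is precisely the kind of quantity controlled in the proof of Theorem~\ref{thm:product} and must again be shown to tend to infinity. Granting these eigenvalue estimates, a Dirichlet-energy comparison closes the argument: for every $u$ with $\int_{\partial\widehat M}u=0$ one has $\int_{\widehat M}|\nabla u|^2\geq\int_C|\nabla u|^2\geq\sigma^{\mathrm{SN}}_1(C,g_\ve)\int_{\partial\widehat M}u^2$, whence $\sigma_1(\widehat M,g_\ve)\geq\sigma^{\mathrm{SN}}_1(C,g_\ve)\to\infty$, where $\sigma^{\mathrm{SN}}_1$ denotes the first nonzero mixed Steklov--Neumann eigenvalue.

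It remains to enforce the volume constraint. Since $\Vol(\widehat M,g_\ve)=\Vol(M,g_\ve|_M)+\Vol(C,g_\ve|_C)$ and the $M$-part of $g_\ve$ may be altered freely away from a neighbourhood of $\partial M$, I would arrange the construction so that $\Vol(C,g_\ve|_C)<\Vol(M,g_M)$ for all small $\ve$ — for instance by keeping the collar thin in the $[0,1]$-direction, or letting its volume tend to $0$ — and then modify $g_\ve$ on a fixed coordinate ball in the interior of $M$, bringing the total volume down to exactly $\Vol(M,g_M)$; such a compactly supported modification leaves the estimate above untouched, since that estimate used only the geometry of $C$.

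The step I expect to be the main obstacle is the mode $j=0$: unlike the modes $j\geq 1$, it is an honest Steklov problem on $B$ with the metric prescribed on the Steklov boundary, so its first nonzero eigenvalue cannot be forced to infinity by any soft potential argument, and must instead be controlled by the same delicate (and $\ve$-dependent) degeneration of $\phi_\ve$ and $g_{B,\ve}$ near the Steklov boundary that underlies Theorem~\ref{thm:product} — in effect, arranging that no boundary function admits a cheap extension, with the region on which $g_\ve$ looks standard shrinking as $\ve\to0$. A closely related technical point is that all the mode-wise lower bounds must be made uniform in $j$, which requires coordinating the divergence rates of $\phi_\ve^{-2}$ and of the $j=0$ eigenvalue.
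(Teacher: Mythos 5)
Your plan is essentially the paper's own proof: the paper likewise reduces to a product collar $K \cong \Sigma\times[0,\ell]$ of the boundary (taken inside $M$, after first replacing $g_M$ by a metric that is a product on the collar, rather than attaching an external collar), applies the mixed Steklov--Neumann refinement of Theorem~\ref{thm:product} with Steklov condition on the outer boundary and Neumann condition on the inner interface --- the $j=0$ mode and the uniformity in $j$ that you flag are exactly what Propositions~\ref{diffusion large first eigenvalue} and~\ref{aux result} supply, the latter via monotonicity in $\lambda$ so that only $\lambda_1(F)$ matters --- and concludes with the same Dirichlet-energy comparison $\sigma_1(M,g_\eps)\geq\sigma_1(K,g_\eps;\Sigma\times\{0\},\Sigma\times\{\ell\})$. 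The only cosmetic difference is the volume bookkeeping: the paper normalizes the total volume first by a conformal change away from the collar (intermediate value theorem) and then uses that the collar construction preserves the volume element, whereas you compensate at the end on an interior ball; both are harmless for the estimate.
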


It is worth to point out that Theorem~\ref{thm:ProductBoundary} is applicable to a much larger class of manifolds than Theorem~\ref{thm:product}. Indeed, no asumption is made on the structure of the interior of the manifold and the only dimensional constraint is positivity. For instance, Theorem~\ref{thm:ProductBoundary} applies to any three-dimensional Riemannian manifold $M$ with boundary $\partial M$ that is isometric to a torus.

\subsection{The Steklov spectrum of warped products}
Let $B$ be a compact Riemannian manifold of dimension $n\geq 1$ with boundary $\partial B$ and let $F$ be a closed Riemannian manifold of dimension $k\geq 1$. Given a positive function $h \in C^\infty(B)$, the \emph{warped product} $M = B \times_h F$ is the product manifold $B\times F$ endowed with the Riemannian metric $g_B + h^2 g_F$. The Laplacian on a warped product is closely related to a diffusion operator $L_h$ on the base manifold $B$ determined by the warping function $h\in C^\infty(B)$. Indeed for $f\in C^\infty(B \times F)$, we have that
  $$\Delta_{M} f=L_hf + h^{-2} \Delta_F f,$$
  where $L_h=\Delta_B-\grad_B\ln h^k$. More precisely, this diffusion operator is given by
  $$L_hf=\Delta_Bf - g_B(\text{grad}_B \ln h^k,\text{grad}_Bf).$$
Some relations between the spectra of these operators in the setting of non-compact Riemannian manifolds have been established in \cite{Po2023,Po2021}. Motivated by them, we give an expression for the Steklov spectrum of a warped product in terms of this diffusion operator, which is a natural generalization of the standard description of the Steklov spectrum of a Riemannian product.

Denote by $\sigma_{\Delta}(F)=\{\lambda_j\}_{j\geq 0}$ the spectrum of the Laplacian on $F$.
  For each $\lambda\in\sigma_{\Delta}(F)$, consider the auxiliary Dirichlet-to-Neumann operator $\dtn_{h,\lambda}:C^\infty(\partial B)\to C^\infty(\partial B)$ defined by $\dtn_{h,\lambda}f=\partial_\nu\bar{f}$, where this time $\bar{f}$ is the solution of
   \begin{equation*}
     \begin{cases}
       L_h\bar{f} +\lambda h^{-2}\bar{f}= 0 &\text{ in } B, \\
       \bar{f} =  f &\text{ on } \partial B.\\
  \end{cases}
\end{equation*}
The spectrum of this alternative Dirichlet-to-Neumann map is also discrete and non-negative. If the dimension $n$ of the base $B$ is at least 2, then
$$\sigma_{\dtn_{h,\lambda}}(B)=\{\sigma_{h,\lambda,j}(B)\}_{j\geq 0}.$$
However, if the base is one-dimensional, then there are only two eigenvalues
$\sigma_{h,\lambda,0}(B)$ and $\sigma_{h,\lambda,1}(B)$, since the boundary $\partial B$ is 0-dimensional.

\begin{theorem}\label{thm:SteklovSpectrumWarpedProduct}
  Let $B$ be a compact Riemannian manifold with boundary and let $F$ be a closed Riemannian manifold of dimension $k\geq 1$. 
Then the Steklov spectrum of the warped product $M = B \times_h F$ is given by
  \begin{gather}\label{equation:warpedseparation}
    \sigma_{\dtn_\Delta}(M) = \bigcup_{\lambda \in \sigma_{\Delta}(F)} \sigma_{\dtn_{h,\lambda}}(B).
  \end{gather}

\end{theorem}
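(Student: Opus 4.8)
The plan is to use separation of variables on the warped product $M = B\times_h F$, exploiting the fact that the Laplacian decomposes as $\Delta_M f = L_h f + h^{-2}\Delta_F f$ as recalled in the excerpt. First I would fix an orthonormal basis $\{\phi_j\}_{j\geq 0}$ of $L^2(F)$ consisting of eigenfunctions of $\Delta_F$, with $\Delta_F \phi_j = \lambda_j \phi_j$, $\lambda_j \in \sigma_\Delta(F)$. Any function on $M$ can be expanded as a (Fourier) series $u(x,y) = \sum_j u_j(x)\phi_j(y)$ with $u_j \in C^\infty(B)$ (after checking the usual elliptic regularity, the formal series of a smooth function converges in $C^\infty$). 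Then $\Delta_M u = 0$ if and only if, for each $j$, the coefficient function $u_j$ satisfies $L_h u_j + \lambda_j h^{-2} u_j = 0$ in $B$, because the $\phi_j$ are linearly independent and $h$ depends only on $x$. Likewise, the trace of $u$ on $\partial M = \partial B \times F$ decomposes as $\sum_j (u_j|_{\partial B})\phi_j$, and the outward normal derivative along $\partial M$ only involves differentiation in the $B$-directions (the normal to $\partial M$ in $M$ is the normal to $\partial B$ in $B$, and $h$ is fixed near there or at least the metric splits), so $\partial_\nu u = \sum_j (\partial_\nu u_j)\phi_j$.

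The second step is to read off the eigenvalue equation. Suppose $f \in C^\infty(\partial M)$ satisfies $\dtn_\Delta f = \sigma f$, and let $\bar f$ be its harmonic extension. Expanding $f = \sum_j f_j \phi_j$ and $\bar f = \sum_j \overline{f_j}\,\phi_j$, the computation above shows $\overline{f_j}$ solves $L_h \overline{f_j} + \lambda_j h^{-2}\overline{f_j} = 0$ in $B$ with boundary value $f_j$, i.e. $\overline{f_j}$ is precisely the extension defining $\dtn_{h,\lambda_j}$, and the Steklov condition $\partial_\nu \bar f = \sigma \bar f|_{\partial M}$ becomes $\dtn_{h,\lambda_j} f_j = \sigma f_j$ for every $j$ with $f_j \not\equiv 0$. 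Hence $\sigma$ is a Steklov eigenvalue of $M$ iff $\sigma$ is an eigenvalue of $\dtn_{h,\lambda}$ for some $\lambda \in \sigma_\Delta(F)$, which gives the set equality \eqref{equation:warpedseparation}. Conversely, given an eigenfunction $\psi$ of $\dtn_{h,\lambda}$ on $\partial B$ with eigenvalue $\sigma$ and an eigenfunction $\phi$ of $\Delta_F$ with eigenvalue $\lambda$, the function $\psi(x)\phi(y)$ on $\partial M$ is a Steklov eigenfunction of $M$ with eigenvalue $\sigma$, by running the same computation in reverse; this produces the needed eigenfunctions and shows the union on the right is contained in $\sigma_{\dtn_\Delta}(M)$.

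To make this into an honest proof of the spectral identity (not merely an equality of sets of numbers, but with multiplicities and completeness), I would phrase it at the level of $L^2$: the harmonic extension operator and the Dirichlet-to-Neumann map respect the orthogonal Hilbert-space decomposition $L^2(\partial M) = \widehat{\bigoplus}_j\, L^2(\partial B)\otimes \mathbb{C}\phi_j$, and on the $j$-th summand $\dtn_\Delta$ acts as $\dtn_{h,\lambda_j}\otimes \mathrm{Id}$. Since each $\dtn_{h,\lambda}$ has discrete spectrum accumulating only at $+\infty$ (as asserted in the excerpt; this follows because $\dtn_{h,\lambda}$ is an elliptic pseudodifferential operator of order $1$ on the closed manifold $\partial B$, or, when $\dim B \geq 2$, via the compactness of the associated trace map, and in the one-dimensional case by direct computation), the orthogonal direct sum of the $\dtn_{h,\lambda_j}$ is self-adjoint with discrete spectrum equal to $\bigcup_\lambda \sigma_{\dtn_{h,\lambda}}(B)$, and its eigenfunctions (products $\psi\otimes\phi$) are complete in $L^2(\partial M)$.

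The main obstacle I expect is justifying the separation of variables rigorously at the required level of generality: namely that the harmonic extension of $f = \sum_j f_j\phi_j$ is genuinely $\sum_j \overline{f_j}\phi_j$, with termwise convergence in a topology strong enough to take traces and normal derivatives. This needs an a priori estimate showing the series converges in $C^\infty(M)$ (equivalently in every Sobolev space), which one gets from elliptic regularity for $\Delta_M$ together with decay of the Fourier coefficients $f_j$ in $H^s(\partial B)$; the warped (non-product) structure means $h^{-2}$ appears as a variable coefficient, but since $h$ is smooth and bounded below by a positive constant on the compact manifold $B$, this causes no essential difficulty — one only has to be careful that the auxiliary problems $L_h \bar f_j + \lambda_j h^{-2}\bar f_j = 0$ remain uniquely solvable, which holds because $\sigma = 0$ is never obstructed for these (the associated bilinear form is coercive, the weight $h^k$ being positive) for $\lambda_j > 0$, and for $\lambda_0 = 0$ one recovers the ordinary weighted harmonic extension. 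Once this analytic point is secured, the algebraic identity \eqref{equation:warpedseparation} follows as above.
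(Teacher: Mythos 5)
Your proposal is correct and follows essentially the same route as the paper's proof (which is carried out for the more general mixed Steklov--Neumann version, Theorem~\ref{mixed Steklov-Neumann spectrum warped product}): separation of variables along an eigenbasis $\{\varphi_i\}$ of $\Delta_F$, the observation that for product functions $\Delta_M(\tilde f\,\bar\varphi_i)=\bigl(\widetilde{L_h f}+\lambda_i\tilde h^{-2}\tilde f\bigr)\bar\varphi_i$, and completeness of the resulting product eigenfunctions in $L^2$ of the boundary. The only difference in packaging is that the paper proves completeness directly, showing the traces $\bar\varphi_i\tilde f_{i,j}$ form an orthonormal basis of $L^2(\partial B\times F)$ with the warped boundary measure $h^k\,dV_{\partial B}\,dV_F$ via a density-of-products estimate; this sidesteps the obstacle you flag about termwise $C^\infty$ convergence of the expanded harmonic extension, and your forward inclusion can likewise be made rigorous without any series argument by projecting $\bar f$ onto each $\varphi_j$ over $F$ and using uniqueness of the extension for $L_h+\lambda_j h^{-2}$.
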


This result will follow from the more general Theorem~\ref{mixed Steklov-Neumann spectrum warped product}, which involves the mixed Steklov-Neumann spectrum.
For the purpose of this paper, the interest of this decomposition is that the dependence on a diffusion operator $L_h$ provides flexibility for certain constructions, even under quite restrictive geometric assumptions.
Moreover, working with warped metrics allowed us to obtain results in lower dimension than any previous methods. For instance, a key step in our work will be to prove a refined version of the following:
\begin{prop}
  Let $B$ be a compact Riemannian manifold with boundary. If the dimension $n$ of $B$ is at least 2,  there exists a family of smooth positive functions $(h_\eps)_{\eps>0}$ such that $h_\eps\equiv 1$ on $\partial B$ and $\sigma_1(\dtn_{L_{h_\eps}})\xrightarrow{\eps\to 0}+\infty$.
\end{prop}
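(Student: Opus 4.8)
The plan is to mimic the conformal construction of Colbois--El Soufi--Girouard~\cite{CoElGi2019}, but transposed to the weighted (drift) setting, where the weight $h^k$ plays the role that a conformal factor plays in~\cite{CoElGi2019}. Concretely, I would look for $h_\eps$ of the form $h_\eps = 1$ outside a small coordinate ball $U$ in the interior of $B$ (so the boundary condition $h_\eps\equiv 1$ on $\partial B$ is automatic), while inside $U$ the function $h_\eps$ is chosen so that the weighted manifold $(B, g_B, h_\eps^k\, dV_{g_B})$ has a thin ``neck'' or a large concentration of weighted volume near a single interior point. The variational characterization of $\sigma_1(\dtn_{L_h})$ is
\begin{gather*}
  \sigma_1(\dtn_{L_h}) = \inf_{\substack{u\in C^\infty(B)\setminus\{0\}\\ \int_{\partial B} u\, h^k\, dV_{g_{\partial B}}=0}}
  \frac{\int_B |\grad_B u|^2\, h^k\, dV_{g_B}}{\int_{\partial B} u^2\, h^k\, dV_{g_{\partial B}}},
\end{gather*}
so since $h_\eps\equiv 1$ near $\partial B$ the denominator does not see $h_\eps$ at all. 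The numerator is the weighted Dirichlet energy on $B$. The strategy is therefore: make it expensive (in weighted Dirichlet energy) for a function to be non-constant, which forces the Rayleigh quotient up.

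The key steps, in order: (1) Fix a point $p$ in the interior of $B$ and a geodesic ball $B(p,r_0)\subset B$. For $\eps$ small, define $h_\eps$ to be $1$ outside $B(p, r_0)$, and inside to be a radial bump that is very small on an annulus $r_\eps < \mathrm{dist}(\cdot,p) < r_0$ (making that annulus a thin ``bottleneck'' for the weighted energy) and comparatively large near $p$. (2) Show that any competitor $u$ with $\int_{\partial B} u\, h_\eps^k = 0$ which has small weighted Dirichlet energy must be nearly constant equal to some value $c$ on the outer region $B\setminus B(p,r_0)$, because outside $B(p,r_0)$ the weight is $1$ and the classical Poincaré/trace estimate on the fixed domain $B\setminus B(p,r_0)$ controls the oscillation of $u$ by its Dirichlet energy there. (3) Since $\int_{\partial B} u\, h_\eps^k\, dV_{g_{\partial B}} = \int_{\partial B} u\, dV_{g_{\partial B}} = 0$ (the weight is $1$ on $\partial B$), that constant $c$ must be close to $0$ unless the Dirichlet energy is bounded below. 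More precisely, combine: oscillation of $u$ on $\partial B$ is $\lesssim \big(\int_{\partial B\cup (B\setminus B(p,r_0))} |\grad u|^2\big)^{1/2}$, and the mean value zero condition then forces $\int_{\partial B} u^2 \lesssim \int_B |\grad u|^2 h_\eps^k$ with a constant independent of $\eps$ \emph{provided} one also controls what happens inside the bump. (4) The delicate point: $u$ could be large near $p$ without paying much weighted energy if the weight near $p$ is large, but getting from the large region near $p$ out to $\partial B$ requires crossing the thin annulus where $h_\eps$ is tiny — and crossing a region of small weight with a big jump in $u$ does cost energy, by a weighted one-dimensional Cauchy--Schwarz / capacity estimate along radii. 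Quantify this: the weighted capacity of the annulus $\{r_\eps < \mathrm{dist}(\cdot,p)<r_0\}$ tends to $0$... no — one wants it to \emph{not} help, i.e. one wants the annulus to act as an insulator so that the value of $u$ on $\partial B$ is essentially decoupled from its values near $p$; then $u\!\mid_{\partial B}$ having mean zero and small energy forces $\int_{\partial B}u^2$ small relative to the energy, with constant $\to 0$ as $\eps\to 0$. Choosing the profile and radii $r_\eps\to 0$ appropriately (e.g. $h_\eps$ interpolating logarithmically, with the bump volume tuned), one arranges the constant in the Rayleigh lower bound to blow up.

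I expect the main obstacle to be step (4): controlling the interaction between the bump region near $p$ and the boundary $\partial B$ through the low-weight annulus, uniformly in $\eps$, and in particular handling the case $n=2$ where the logarithmic nature of capacity in two dimensions makes the estimates borderline — this is presumably exactly why the hypothesis $n\geq 2$ appears and why a ``refined version'' is needed in the main text. The cleanest route is probably to test against the explicit harmonic-in-the-weight function on the annulus to compute the weighted capacity exactly, then use a decomposition $u = u_{\mathrm{out}} + u_{\mathrm{in}}$ adapted to the level $r_0$ and estimate each piece's contribution to $\int_{\partial B}u^2$ separately, absorbing the cross term. A secondary technical point is smoothness: the piecewise description of $h_\eps$ must be mollified near $\mathrm{dist}(\cdot,p)=r_0$ and $=r_\eps$ without destroying the estimates, which is routine but needs a line. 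Finally, one should note that the refinement needed for the applications (Theorems~\ref{thm:product}, \ref{thm:ProductBoundary}) is a quantitative rate and the preservation of the volume element, so the construction above should be carried out keeping track of $\int_B h_\eps^k\, dV_{g_B}$, which here we are free to let grow — consistent with the Fraser--Schoen obstruction that such volume must diverge.
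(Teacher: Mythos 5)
There is a genuine gap: the construction you propose cannot work, for a reason that no amount of tuning of the bump profile can repair. You take $h_\eps\equiv 1$ outside a small ball $B(p,r_0)$ around an interior point, so in particular $h_\eps\equiv 1$ on a \emph{fixed} region containing a full neighborhood of $\partial B$. Now pick a single fixed function $u\in C^\infty(B)$ with $\int_{\partial B}u\,dV=0$, $\int_{\partial B}u^2\,dV=1$, and $u$ constant on a neighborhood of $\overline{B(p,r_0)}$. By the variational characterization you quote (Proposition~\ref{diffusion first non-zero eigenvalue}), for every $\eps$
\[
\sigma_1\bigl(\dtn_{L_{h_\eps}}\bigr)\;\leq\;\frac{\int_B \|du\|^2\,h_\eps^k\,dV_B}{\int_{\partial B}u^2\,dV_\Sigma}\;=\;\int_{B\smallsetminus B(p,r_0)}\|du\|^2\,dV_B,
\]
a number independent of $\eps$. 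So the eigenvalue stays bounded no matter how you shape the bump or the low-weight annulus. Your guiding intuition in step (4) is inverted: making the annulus an ``insulator'' that decouples the bump from the boundary is precisely what keeps $\sigma_1$ bounded, because the boundary region then sees an unchanged, $\eps$-independent Steklov-type problem; there is no mechanism forcing the trace/Poincar\'e constant near $\partial B$ to degenerate as $\eps\to 0$ (indeed your own step (3) only claims a constant \emph{independent} of $\eps$, whereas the statement requires the Rayleigh lower bound to blow up). Your reading of the hypothesis $n\geq 2$ as a two-dimensional capacity issue is also off target; it has nothing to do with logarithmic capacity.

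The missing idea, which is the heart of the paper's proof (Proposition~\ref{diffusion large first eigenvalue}), is that the region where the weight is moderate must \emph{shrink onto the boundary}: working in a collar $\Sigma\times[0,\ell)$ (harmless by quasi-isometry, Proposition~\ref{quasi-isometric}), one takes $h$ radial with $h=1$ on $\Sigma\times[0,\eps/2]$, $h\geq\eps^\delta$ everywhere, and $h=\eps^{-2}$ on all of $B\smallsetminus(\Sigma\times[0,3\eps))$. Then every admissible test function is forced into a dichotomy, carried out mode by mode after expanding $f(x,t)=\sum_j a_j(t)\vf_j(x)$ in boundary eigenfunctions: either the mode decays by a factor $1/2$ across the thin collar, which costs radial energy at least of order $\eps^{\delta-1}a_j(0)^2$ by Cauchy--Schwarz, or it persists into the bulk, where the huge weight $\eps^{-2}$ multiplies the tangential energy $\lambda_j a_j^2$ (and, for the locally constant boundary modes when $\Sigma_{\St}$ is disconnected, a Neumann-gap argument via \cite[Lemma 3.4]{CoElGi2019} on the core). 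Either way the Rayleigh quotient is at least $C\eps^{\delta-1}\to+\infty$. In short, the large weight must be placed where no competitor can avoid it, i.e.\ on the complement of an $\eps$-thin collar of $\partial B$, not concentrated near an interior point behind a bottleneck.
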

See Proposition~\ref{diffusion large first eigenvalue}.
Let us stress that this proposition applies even for manifolds $B$ of dimension $n=2$. This is in contrast to conformal perturbations $g=\omega^2g_B$, where $\omega\in C^\infty(B)$ with $\omega_\eps=1$ on the boundary, for which Kokarev's bound~\eqref{ineq:Kokarev} prevent arbitrarily large spectral gap $\sigma_1$ if $n=2$.  This point is crucial in making Theorem~\ref{thm:product} and Theorem~\ref{thm:ProductBoundary} work for manifolds $M$ of dimension $m=3$.
See Remark~\ref{remark:conformal} for further discussion.

\subsection{Plan}
The paper is organized as follows: In Section \ref{Preliminaries}, we provide some preliminaries involving the mixed Steklov-Neumann spectrum of symmetric Laplace-type operators. In Section \ref{Warped products}, we focus on warped products and establish Theorem \ref{thm:SteklovSpectrumWarpedProduct}. Section \ref{Large} is devoted to Theorems \ref{thm:product} and \ref{thm:ProductBoundary}.

\subsection*{Acknowledgements}
The authors would like to thank Bruno Colbois for carefully reading a preliminary version of this paper. His comments lead to substantial improvements in the presentation.

\section{\bf Preliminary material}\label{Preliminaries}

Throughout this paper manifolds are assumed to be connected, with possibly non-connected, smooth boundary. The volume element corresponding to a Riemannian manifold $M$ will be denoted by $dV_M$, or $dV_{M,g}$ if more precision is needed regarding the metric, and sometimes also simply by $dV$ when the context is clear.

\subsection{Mixed Steklov--Neumann spectrum of diffusion and Laplace type operators}
Let $M$ be a compact Riemannian manifold with boundary $\Sigma$ and denote by $\nu$ the outward pointing unit normal to $\Sigma$. Write $\Sigma$ as the disjoint union of $\Sigma_{\St}$ and $\Sigma_{\Ne}$, where $\Sigma_{\St}$ and $\Sigma_{\Ne}$ are unions of connected components of $\Sigma$, with $\Sigma_{\St}$ non-empty and $\Sigma_{\Ne}$ possibly empty. A \emph{symmetric Laplace-type operator} on $M$ is an operator of the form
\[
L = \Delta - \grad \ln h + V,
\]
where $h \in C^{\infty}(M)$ is positive and $V \in C^{\infty}(M)$. Denote by $L^2_h(M)$ the space $L^2(M)$ endowed with the inner product
\[
\< u , v \>_{h} = \int_M uv\,hdV_M.
\]
It is immediate to verify that
\[
\< L u , v \>_h = \< u , L v \>_h = \int_M (\< d u , d v \> + Vuv)\,hdV_M
\]
for any $u,v \in C^{\infty}_{\Sigma_{\St}}(M)$, where $C^\infty_{\Sigma_{\St}}(M)$ stands for the space of smooth functions in $M$ vanishing on $\Sigma_{\St}$ and satisfying Neumann boundary condition on $\Sigma_{\Ne}$.

The operator
$L \colon C^{\infty}_{\Sigma_{\St}}(M) \subset L^2_h(M) \to L^2_h(M)$
admits a unique self-adjoint extension
\[
\bar{L} \colon D(\bar{L}) \subset L^2_h(M) \to L^2_h(M).
\]
The spectrum of this operator is discrete and consists of all $\lambda \in \R$ such that there exists a non-zero $f \in C^{\infty}_{\Sigma_{\St}}(M)$ satisfying $Lf = \lambda f$ in $M$. It is easy to see that if $V \geq 0$, then zero is not in the spectrum of this operator, which means that $\bar{L}$ is bijective.

In this case, it follows that any $f \in C^{\infty}(\Sigma_{\St})$ admits a unique extension $\bar{f} \in C^{\infty}(M)$ satisfying $L \bar{f} = 0$ in $M$ and $\partial_\nu \bar{f} = 0$ on $\Sigma_{\Ne}$. This gives rise to the \emph{modified Dirichlet-to-Neumann map}
\[
\mathcal{D}_L \colon C^{\infty}(\Sigma_{\St}) \subset L^2_h(\Sigma_{\St}) \to L^2_h(\Sigma_{\St})
\]
given by $\mathcal{D}_L f = \partial_\nu \bar{f}$. We readily see that
\[
\< \mathcal{D}_L  u , v \>_h = \< u , \mathcal{D}_L  v \>_h = \int_M (\< d \bar{u}, d \bar{v} \> + V \bar{u} \bar{v})\,hdV_M
\]
for any $u,v \in C^\infty(\Sigma_{\St})$. The operator $\mathcal{D}_L$ has a unique self-adjoint extension with discrete spectrum $\sigma(L;\Sigma_{\St},\Sigma_{\Ne})$. This is called the \emph{mixed Steklov-Neumann spectrum} of $L$ and consists of all $\sigma \in \R$ such that there exists a non-zero $f \in C^{\infty}(M)$ satisfying
\begin{equation*}
	\begin{cases}
		L f = 0 &\text{ in } M, \\
		\partial_\nu f =  \sigma f &\text{ on } \Sigma_{\St},\\
		\partial_\nu f = 0 &\text{ on } \Sigma_{\Ne}.
	\end{cases}
\end{equation*}
The eigenvalues of this operators are denoted by
$$0 \leq \sigma_0(L;\Sigma_{\St},\Sigma_{\Ne}) \leq \sigma_1(L;\Sigma_{\St}, \Sigma_{\Ne}) \leq \sigma_2(L;\Sigma_{\St},\Sigma_{\Ne}) \leq \dots,$$
repeated according to their multiplicities. In the case where $\Sigma_{\Ne}$ is empty, the spectrum of this operator is called the \emph{Steklov spectrum} of $L$. We denote this by $\sigma_{\dtn_L}(M)=\sigma(L;\Sigma_S,\emptyset)$ and by $0 \leq \sigma_0(L) \leq \sigma_1(L) \leq \sigma_2(L) \leq \dots$ the eigenvalues of the operator. In the case of the Laplacian $L = \Delta$, the spectrum of this operator is the \textit{mixed Steklov-Neumann spectrum of $M$} and we denote by $0 = \sigma_{0}(M;\Sigma_{\St},\Sigma_{\Ne}) \leq \sigma_{1}(M;\Sigma_{\St} , \Sigma_{\Ne}) \leq \dots$ the corresponding eigenvalues.

The following is an immediate consequence of Rayleigh's theorem, together with the fact that for any $f \in C^\infty(\Sigma_{\St})$, the extension $\bar{f} \in C^\infty(M)$ satisfying $L\bar{f} = 0$ in $M$ and $\partial_\nu \bar{f} = 0$ on $\Sigma_{\Ne}$ minimizes the Dirichlet energy corresponding to $L$ among all smooth extensions of $f$.

\begin{prop}\label{Laplace type first eigenvalue}
	Let $L = \Delta - \grad \ln h + V$ be a symmetric Laplace-type operator on a compact Riemannian manifold $M$ with boundary, where $V$ is non-negative.  Then the first mixed Steklov-Neumann eigenvalue of $L$ is given by
	\[
	\sigma_0(L;\Sigma_{\St},\Sigma_{\Ne}) = \min_{f} \frac{\int_M (\| d f \|^2 + V f^2)\,hdV_M}{\int_{\Sigma_{\St}} f^2\,hdV_{\Sigma}},
	\]
	where the minimum is taken over all non-zero $f \in C^\infty(M)$.
\end{prop}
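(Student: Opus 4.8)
The plan is to deduce the stated formula from two ingredients, both essentially recorded in the discussion preceding the proposition: the variational characterisation of the bottom of the spectrum of the self-adjoint operator $\mathcal{D}_L$ acting on $L^2_h(\Sigma_{\St})$, and the fact that $L$-harmonic extensions minimise the associated Dirichlet energy. I would begin by applying Rayleigh's theorem to $\mathcal{D}_L$: since it has discrete spectrum and $C^\infty(\Sigma_{\St})$ is a form core, its first eigenvalue is
\[
\sigma_0(L;\Sigma_{\St},\Sigma_{\Ne}) = \min_{0\neq f\in C^\infty(\Sigma_{\St})} \frac{\langle \mathcal{D}_L f,f\rangle_h}{\langle f,f\rangle_h} = \min_{0\neq f\in C^\infty(\Sigma_{\St})} \frac{\int_M (\|d\bar f\|^2 + V\bar f^2)\,hdV_M}{\int_{\Sigma_{\St}} f^2\,hdV_{\Sigma}},
\]
where $\bar f\in C^\infty(M)$ is the extension with $L\bar f=0$ in $M$ and $\partial_\nu\bar f=0$ on $\Sigma_{\Ne}$ (smooth by elliptic regularity), and the second equality is the quadratic-form identity for $\mathcal{D}_L$ stated above. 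The content of the proposition is that $\bar f$ may be replaced by an arbitrary smooth extension of $f$.

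To justify this replacement I would record the following energy-minimisation fact. For $w\in C^\infty(M)$ with $Lw=0$ in $M$ and $\partial_\nu w=0$ on $\Sigma_{\Ne}$, and any $\psi\in C^\infty(M)$ vanishing on $\Sigma_{\St}$, the Green-type identity underlying the quadratic-form formulas above gives $\int_M(\langle dw,d\psi\rangle + Vw\psi)\,hdV_M = \int_{\Sigma_{\St}}(\partial_\nu w)\,\psi\,hdV_{\Sigma} = 0$ (the boundary term over $\Sigma_{\St}$ vanishes because $\psi$ does, and there is no boundary term over $\Sigma_{\Ne}$ because $\partial_\nu w$ does). Consequently, if $\phi\in C^\infty(M)$ is any extension of $f:=w|_{\Sigma_{\St}}$, then writing $\phi=w+\psi$ with $\psi|_{\Sigma_{\St}}=0$ and expanding the quadratic form,
\[
\int_M(\|d\phi\|^2 + V\phi^2)\,hdV_M = \int_M(\|dw\|^2 + Vw^2)\,hdV_M + \int_M(\|d\psi\|^2 + V\psi^2)\,hdV_M \geq \int_M(\|dw\|^2 + Vw^2)\,hdV_M,
\]
since $V\geq 0$; this is the energy-minimising property quoted just before the proposition.

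Combining the two ingredients finishes the argument. Given any $f\in C^\infty(M)$ with $f|_{\Sigma_{\St}}\not\equiv 0$, set $w=\overline{f|_{\Sigma_{\St}}}$; the denominators of the Rayleigh quotients for $f$ and for $w$ coincide (same trace on $\Sigma_{\St}$), the numerator does not increase when $f$ is replaced by $w$ by the previous paragraph, and the quotient for $w$ is $\geq \sigma_0(L;\Sigma_{\St},\Sigma_{\Ne})$ by the displayed Rayleigh formula; functions with $f|_{\Sigma_{\St}}\equiv 0$ contribute an infinite quotient and may be ignored. Hence the infimum over all non-zero $f\in C^\infty(M)$ is $\geq \sigma_0(L;\Sigma_{\St},\Sigma_{\Ne})$. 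For the reverse inequality I would take $f$ to be the $L$-harmonic extension of a first eigenfunction of $\mathcal{D}_L$, for which the quotient equals $\sigma_0(L;\Sigma_{\St},\Sigma_{\Ne})$ exactly, so the infimum is attained and the claimed minimum holds.

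I do not expect a genuine obstacle: the proof is bookkeeping once the weighted Green identities are in hand. The only points requiring a little care are verifying that $C^\infty$ test functions suffice rather than $H^1$ ones — which follows from smoothness of harmonic extensions and density of $C^\infty(\Sigma_{\St})$ in the form domain of $\mathcal{D}_L$ — and correctly discarding the degenerate test functions with $f|_{\Sigma_{\St}}\equiv 0$, so that the asserted quantity is a true minimum and not merely an infimum over a restricted class.
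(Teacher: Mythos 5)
Your argument is correct and is exactly the route the paper takes: it derives the formula from Rayleigh's theorem for $\mathcal{D}_L$ together with the fact that the extension with $L\bar f=0$ in $M$ and $\partial_\nu\bar f=0$ on $\Sigma_{\Ne}$ minimizes the weighted energy among smooth extensions, which the paper states without proof just before the proposition. You have simply written out the orthogonality/expansion details (and the exclusion of test functions vanishing on $\Sigma_{\St}$) that the paper treats as immediate.
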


In the case where $V = 0$, the operator $L = \Delta - \grad \ln h$ is called a \emph{diffusion operator} on $M$. We readily see that constant functions are eigenfunctions corresponding to $\sigma_0(L;\Sigma_{\St},\Sigma_{\Ne}) = 0$. Therefore, Rayleigh's theorem yields the following expression for the first non-zero Steklov eigenvalue.

\begin{prop}\label{diffusion first non-zero eigenvalue}
	Let $L = \Delta - \grad \ln h$ be a diffusion operator on a compact Riemannian manifold $M$ with boundary. Then the first non-zero mixed Steklov-Neumann eigenvalue of $L$ is given by
	\[
	\sigma_1(L;\Sigma_{\St},\Sigma_{\Ne}) = \min_{f} \frac{\int_{M} \| d f \|^2 hdV_M}{\int_{\Sigma_{\St}} f^2 hdV_\Sigma},
	\]
	where the minimum is taken over all non-zero $f \in C^\infty(M)$ which satisfy $\int_{\Sigma_{\St}} f hdV_\Sigma = 0$.
\end{prop}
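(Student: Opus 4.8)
The plan is to combine the spectral theorem for the self-adjoint operator $\mathcal{D}_L$ with the Dirichlet energy minimizing property of $L$-harmonic extensions recorded above. Since $V=0$, every constant $c$ satisfies $Lc=0$ in $M$, $\partial_\nu c=0$ on $\Sigma_{\Ne}$, and $\partial_\nu c=0=0\cdot c$ on $\Sigma_{\St}$, so $0$ is a mixed Steklov--Neumann eigenvalue of $L$ whose eigenspace contains the constants. Conversely, if $\mathcal{D}_L\vf=0$ for some $\vf\in C^\infty(\Sigma_{\St})$, then the quadratic form identity recalled before the proposition gives $\int_M\|d\bar\vf\|^2\,hdV_M=\<\mathcal{D}_L\vf,\vf\>_h=0$, and connectedness of $M$ forces $\bar\vf$, hence $\vf$, to be constant. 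Thus $\sigma_0(L;\Sigma_{\St},\Sigma_{\Ne})=0$ with one-dimensional eigenspace spanned by the constants, and the Courant--Fischer characterization of the second eigenvalue of $\mathcal{D}_L$ reads
\[
\sigma_1(L;\Sigma_{\St},\Sigma_{\Ne})=\min\Bigl\{\frac{\<\mathcal{D}_L\vf,\vf\>_h}{\<\vf,\vf\>_h}\ :\ 0\neq\vf\in C^\infty(\Sigma_{\St}),\ \int_{\Sigma_{\St}}\vf\,hdV_\Sigma=0\Bigr\}.
\]

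Next I would pass from boundary functions to functions on all of $M$. Substituting $\<\mathcal{D}_L\vf,\vf\>_h=\int_M\|d\bar\vf\|^2\,hdV_M$ and $\<\vf,\vf\>_h=\int_{\Sigma_{\St}}\vf^2\,hdV_\Sigma$ turns the previous display into a minimum of Rayleigh quotients of boundary functions. Now, given any nonzero $f\in C^\infty(M)$ with $\int_{\Sigma_{\St}}f\,hdV_\Sigma=0$ and $f|_{\Sigma_{\St}}\not\equiv 0$, set $\vf:=f|_{\Sigma_{\St}}$; then $\vf$ is admissible above, and since $\bar\vf$ minimizes the Dirichlet energy among all smooth extensions of $\vf$ we obtain $\int_M\|d\bar\vf\|^2\,hdV_M\leq\int_M\|df\|^2\,hdV_M$, whence
\[
\frac{\int_M\|df\|^2\,hdV_M}{\int_{\Sigma_{\St}}f^2\,hdV_\Sigma}\ \geq\ \frac{\int_M\|d\bar\vf\|^2\,hdV_M}{\int_{\Sigma_{\St}}\vf^2\,hdV_\Sigma}\ \geq\ \sigma_1(L;\Sigma_{\St},\Sigma_{\Ne}).
\]
Equality is attained by $f=\bar\vf_1$ for an eigenfunction $\vf_1$ of $\mathcal{D}_L$ associated with $\sigma_1$: indeed $\int_{\Sigma_{\St}}\bar\vf_1\,hdV_\Sigma=\int_{\Sigma_{\St}}\vf_1\,hdV_\Sigma=0$ and $\int_M\|d\bar\vf_1\|^2\,hdV_M=\<\mathcal{D}_L\vf_1,\vf_1\>_h=\sigma_1\int_{\Sigma_{\St}}\vf_1^2\,hdV_\Sigma$. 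This yields the stated formula, with the minimum realized.

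I do not expect a genuine obstacle: once the two inputs are in place — the identification of the kernel of $\mathcal{D}_L$ with the constants, which uses connectedness of $M$, and the energy minimizing property of extensions — the conclusion follows from Rayleigh's theorem exactly as in the classical Steklov case. The only point needing a little care is the degenerate case $f|_{\Sigma_{\St}}\equiv 0$, where the denominator vanishes; this is harmless, since such an $f$ is necessarily non-constant (as $\Sigma_{\St}\neq\emptyset$), so that $\int_M\|df\|^2\,hdV_M>0$ and its Rayleigh quotient is $+\infty$, and hence it does not affect the minimum.
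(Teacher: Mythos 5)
Your argument is correct and follows the same route the paper takes: the paper derives this proposition directly from Rayleigh's (min--max) theorem for $\mathcal{D}_L$ together with the fact that the $L$-harmonic extension with Neumann condition on $\Sigma_{\Ne}$ minimizes the weighted Dirichlet energy among smooth extensions, which are exactly your two inputs. Your additional details (identifying the kernel of $\mathcal{D}_L$ with the constants via connectedness, checking equality at $f=\bar\vf_1$, and disposing of the case $f|_{\Sigma_{\St}}\equiv 0$) are the routine verifications the paper leaves implicit.
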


\begin{rem}\label{remark:conformal}
Consider the conformal perturbation $g=\omega^2 g_M$, where $\omega\equiv 1$ on $\partial M$. Then, the Rayleigh--Steklov quotient associated to $\Delta_g$ is
$$\frac{\int_{M} \| d f \|_{g_M}^2 \omega^{m-2}dV_M}{\int_{\Sigma_{\St}} f^2 dV_\Sigma}.$$
Suppose that the dimension $m$ of $M$ is at least 3, and consider the diffusion operator $L= \Delta_{g_{M}} - \grad_{g_M} \ln h$ on $(M,g_M)$, where $h=\omega^{m-2}$. As a consequence of the min-max characterization of eigenvalues, we derive that the mixed Steklov-Neumann spectrum $\sigma(M,g;\Sigma_{\St},\Sigma_{\Ne})$ coincides with the mixed Steklov-Neumann spectrum $\sigma(L,g_M;\Sigma_{\St},\Sigma_{\Ne})$. This also follows from the fact that for any $f \in C^\infty(M)$ we have that $\Delta_{g} f = 0$ in $M$ if and only if $L f = 0$ in $M$.

One could then be tempted to conclude that we could have achieved the goals of this paper using conformal perturbations, without the introduction of diffusion operators and warped products, but this is misleading. Indeed, one of the strengths of our approach is that it also works when $M$ is two-dimensional.

\end{rem}

\subsection{Quasi-isometries and eigenvalues}
Two Riemannian metrics $g_1, g_2$ on $M$ are called \emph{quasi-isometric} with ratio $C \geq 1$ if
\[
\frac{1}{C} \leq \frac{g_1(X,X)}{g_2(X,X)} \leq C
\]
for any non-zero tangent vector $X$ of $M$. The following well-known proposition is a straightforward consequence of the min-max characterization of eigenvalues.

\begin{prop}\label{quasi-isometric}
Let $M^m$ be a compact manifold with boundary. If $g_1, g_2$ are quasi-isometric Riemannian metrics on $M$ with ratio $C\geq 1$, then the mixed Steklov-Neumann eigenvalues of $M$ with respect to these metrics are related by
	\[
	\frac{1}{C^{2m+1}} \leq \frac{\sigma_k(M,g_1;\Sigma_{\St} , \Sigma_{\Ne})}{\sigma_k(M,g_2;\Sigma_{\St} , \Sigma_{\Ne})} \leq C^{2m+1}
	\]
	for any $k \in \mathbb{N}$.
\end{prop}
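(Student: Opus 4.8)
The plan is to invoke the variational characterization of the mixed Steklov--Neumann eigenvalues and track how each ingredient of the Rayleigh quotient transforms under a quasi-isometry of ratio $C$. Recall that the min--max principle gives
\[
\sigma_k(M,g;\Sigma_{\St},\Sigma_{\Ne}) = \min_{E} \max_{0\neq f\in E} \frac{\int_M \|df\|_g^2 \, dV_{M,g}}{\int_{\Sigma_{\St}} f^2 \, dV_{\Sigma,g}},
\]
where $E$ ranges over $(k+1)$-dimensional subspaces of $C^\infty(M)$ consisting of functions satisfying the Neumann condition on $\Sigma_{\Ne}$ (in the weak sense, i.e.\ over the appropriate Sobolev completion); the test-function space and the admissible subspaces $E$ do not depend on the metric. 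So it suffices to compare, for a single fixed $f$, the numerator and denominator evaluated with $g_1$ versus $g_2$, and then take min--max.

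The key steps, in order: First, I would record the pointwise comparison of the volume elements. If $g_1,g_2$ are quasi-isometric with ratio $C$, then in any local frame the matrices satisfy $C^{-1}g_2 \le g_1 \le C g_2$, hence $\det g_1 \le C^m \det g_2$ and similarly $\det g_2 \le C^m \det g_1$, giving $C^{-m/2} dV_{M,g_2} \le dV_{M,g_1} \le C^{m/2} dV_{M,g_2}$; the same inequality with $m$ replaced by $m-1$ holds for the induced boundary volume elements $dV_{\Sigma,g_i}$ on $\Sigma_{\St}$, since $g_1,g_2$ restrict to quasi-isometric metrics on $\Sigma$ with the same ratio $C$. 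Second, I would compare the gradient norms: $\|df\|_{g}^2 = g^{-1}(df,df)$, and the inverse metrics satisfy $C^{-1}g_2^{-1}\le g_1^{-1}\le C g_2^{-1}$ as bilinear forms on the cotangent space (inverting reverses the inequality up to the same constant $C$), so $C^{-1}\|df\|_{g_2}^2 \le \|df\|_{g_1}^2 \le C\|df\|_{g_2}^2$. Third, I would combine: the numerator changes by at most a factor $C^{1+m/2}$ in either direction and the denominator (note $f^2$ does not involve the metric) by at most a factor $C^{(m-1)/2}$, so the whole Rayleigh quotient changes by at most a factor $C^{1+m/2+(m-1)/2} = C^{(m+1)/2 + m/2}$. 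This is comfortably bounded by $C^{2m+1}$ for all $m\ge 1$ and $C\ge 1$, since $(m+1)/2 + m/2 = m + 1/2 \le 2m+1$. Fourth, since this bound holds for every admissible $f$, it holds after taking $\max$ over $f\in E$ and then $\min$ over $E$, yielding the claimed two-sided bound on $\sigma_k$ with constant $C^{2m+1}$.

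I do not expect a genuine obstacle here; the statement is deliberately not sharp (the exponent $2m+1$ is generous), and the only points requiring a little care are: (i) making sure the admissible class of test functions used in the min--max is metric-independent, which is true because the Neumann condition $\partial_\nu f = 0$ on $\Sigma_{\Ne}$ can be encoded weakly through the bilinear form and the natural form domain $H^1(M)$ is the same set for both metrics (the $H^1$ norms being equivalent, again by quasi-isometry); and (ii) the passage from pointwise inequalities between the integrands to inequalities between the integrals, which is immediate by monotonicity of the integral once the volume-element comparison is in hand. Thus the proof is essentially a bookkeeping exercise in min--max combined with the two elementary linear-algebra facts about how $\det g$ and $g^{-1}$ behave under $C^{-1}g_2\le g_1\le Cg_2$.
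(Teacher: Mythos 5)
Your proposal is correct and is exactly the standard min--max bookkeeping argument that the paper itself invokes (the paper states the proposition as "a straightforward consequence of the min-max characterization" without writing out details). Your sharper bound $C^{m+1/2}$ on the Rayleigh-quotient distortion indeed implies the stated $C^{2m+1}$, and your observation that the form domain $H^1(M)$ (with the Neumann condition on $\Sigma_{\Ne}$ being natural, not imposed) is metric-independent is the right way to justify comparing the two min--max problems.
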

In the context of the Steklov problem, this was previously used in~\cite{CoGiGi2019,CoGiRa2018} for instance.

\section{\bf The Steklov spectrum of warped products}\label{Warped products}
The purpose of this section is to prove a structure theorem for the mixed Steklov--Neumann problem of a warped product $B\times_hF$. Theorem~\ref{thm:SteklovSpectrumWarpedProduct} from the introduction will be obtained as a corollary.
Let $B$ be a compact Riemannian manifold with boundary $\Sigma$ and let $F$ a closed Riemannian manifold of dimension $k$. Given a positive function $h \in C^\infty(B)$, the \emph{warped product} $M = B \times_h F$ is the product manifold $B\times F$ endowed with the Riemannian metric $g_B + h^2 g_F$.
Elementary geometric properties of warped products may be found for example in \cite[Section 1.4]{FaIaPa2004}.
Note that a part of this section also applies to the total space of a Riemannian submersion with fibers of basic mean curvature. In that case, the diffusion operators will involve the mean curvature of the fibers rather than the warping function.

Let $p \colon M \to B$ be the projection and denote by $F_x = p^{-1}(x)$ the fiber over $x \in B$. Given $f \in C^\infty(B)$, set $\tilde{f} = f \circ p$. Moreover, for a vector field $X$ on $B$, we denote by $\tilde{X}$ the corresponding vector field on $M$, under the usual identification of the tangent space of $M$ with the sum of the tangent spaces of $B$ and $F$. 

The unnormalized mean curvature vector field of $F_x$ at $(x,y) \in M$ is given by
\[
H = \sum_{i=1}^k \alpha(e_i,e_i) = - k \grad \ln \tilde{h},
\]
where $\alpha$ is the second fundamental form of the fiber $F_x\subset M$ and $\{e_i\}_{i=1}^k$ is an orthonormal basis of $T_{(x,y)}F_x$.
It is straightforward to compute (cf. for instance, \cite[Subsection 2.2]{BeMoPi2012}) that the gradient and the Laplacian of any $f \in C^\infty(B)$ and its lift $\tilde{f} \in C^\infty(M)$ are related by
\begin{equation*}
	\grad \tilde{f} = \widetilde{\grad f} \text{ and } \Delta \tilde{f} = \widetilde{\Delta f} + \< H , \grad \tilde{f} \>.
\end{equation*}
Considering the diffusion operator $L = \Delta - \grad \ln h^k$ on $B$, the latter one may be rewritten as
\begin{equation}\label{Laplacian of lift}
	\Delta \tilde{f} = \widetilde{Lf}
\end{equation}
for any $f \in C^\infty(B)$.
The next result is a generalisation of Theorem~\ref{thm:SteklovSpectrumWarpedProduct} from the introduction.
\begin{theorem}\label{mixed Steklov-Neumann spectrum warped product}
 Let $B$ be a compact Riemannian manifold with boundary $\Sigma$ written as the disjoint union of\, $\Sigma_{\St}$ and $\Sigma_{\Ne}$, where $\Sigma_{\St}$ and $\Sigma_{\Ne}$ are unions of connected components of\, $\Sigma$, with $\Sigma_{\St}$ non-empty and $\Sigma_{\Ne}$ possibly empty. Let $F^k$ a closed Riemannian manifold. 
  Given a positive function $h \in C^\infty(B)$, the mixed Steklov-Neumann spectrum of the warped product $M = B \times_h F$ is given by
	\[
	\sigma(M;\Sigma_{\St} \times F, \Sigma_{\Ne} \times F) = \bigcup_{\lambda \in \sigma_{\Delta}(F)} \sigma(L +\lambda h^{-2} ; \Sigma_{\St},\Sigma_{\Ne}),
	\]
	where $L = \Delta - \grad \ln h^k$ on $B$.
\end{theorem}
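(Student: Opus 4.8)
The plan is to prove the decomposition by separation of variables, using the eigenfunctions of the Laplacian on the closed fiber $F$ as a Hilbert basis, and then verifying that the two sides of the claimed equality contain each other.

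First I would fix an $L^2$-orthonormal basis $\{\phi_j\}_{j\geq 0}$ of $L^2(F)$ consisting of eigenfunctions of $\Delta_F$, with $\Delta_F\phi_j=\lambda_j\phi_j$. Given an eigenfunction $f$ on $M=B\times_hF$ for the mixed Steklov--Neumann problem with eigenvalue $\sigma$, I would expand it along the fiber: $f(x,y)=\sum_j f_j(x)\phi_j(y)$ with $f_j(x)=\int_F f(x,y)\phi_j(y)\,dV_F$ (note $dV_M=h^k\,dV_B\,dV_F$, but $\phi_j$ are orthonormal in the \emph{unweighted} $L^2(F)$, so this is the natural pairing). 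Using the warped-product formula $\Delta_M\tilde u=\widetilde{Lu}$ for lifts from the base together with $\Delta_M f=L_hf+h^{-2}\Delta_Ff$ — where $L=L_h=\Delta_B-\grad_B\ln h^k$, as recalled just before the statement — the equation $\Delta_Mf=0$ separates into $L f_j+\lambda_j h^{-2}f_j=0$ in $B$ for each $j$. The Neumann condition $\partial_\nu f=0$ on $\Sigma_{\Ne}\times F$ becomes $\partial_\nu f_j=0$ on $\Sigma_{\Ne}$, and the Steklov condition $\partial_\nu f=\sigma f$ on $\Sigma_{\St}\times F$ becomes $\partial_\nu f_j=\sigma f_j$ on $\Sigma_{\St}$, since $\nu$ is tangent to $B$ and the metric on a neighbourhood of $\Sigma\times F$ respects the product. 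Hence each nonzero $f_j$ is an eigenfunction of $\mathcal{D}_{L+\lambda_j h^{-2}}$ (in the mixed sense) with eigenvalue $\sigma$, which shows that every Steklov--Neumann eigenvalue of $M$ lies in $\bigcup_\lambda \sigma(L+\lambda h^{-2};\Sigma_{\St},\Sigma_{\Ne})$.

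For the reverse inclusion, given $\lambda=\lambda_j\in\sigma_\Delta(F)$ and an eigenfunction $u$ on $B$ of $\mathcal{D}_{L+\lambda h^{-2}}$ with eigenvalue $\sigma$, I would set $f(x,y)=u(x)\phi_j(y)$ and check directly, again via $\Delta_M(\tilde u\,\widetilde{\phi_j})$ and the warped-product identities, that $\Delta_Mf=(\widetilde{Lu})\phi_j+h^{-2}u\,\widetilde{\Delta_F\phi_j}=\widetilde{(Lu+\lambda h^{-2}u)}\,\phi_j=0$, that $f$ satisfies the Neumann condition on $\Sigma_{\Ne}\times F$, and that $\partial_\nu f=\sigma f$ on $\Sigma_{\St}\times F$. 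This produces a genuine Steklov--Neumann eigenfunction on $M$, so $\sigma\in\sigma(M;\Sigma_{\St}\times F,\Sigma_{\Ne}\times F)$.

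The main subtlety — and the step I would be most careful about — is the functional-analytic bookkeeping that turns the formal separation of variables into an honest statement about the full spectra: one must argue that the family $\{f_j\phi_j\}$ actually reconstructs $f$ (completeness of $\{\phi_j\}$ and convergence of the expansion in the appropriate Sobolev space, so that boundary traces and normal derivatives may be taken term by term), and conversely that the eigenfunctions $u\,\phi_j$ coming from \emph{all} the base problems together span a dense subspace, so that no eigenvalue is missed and multiplicities are accounted for correctly. Here I would lean on the fact — recorded in the preliminary section — that each operator $\mathcal{D}_{L+\lambda h^{-2}}$ has discrete spectrum with an $L^2_h(\Sigma_{\St})$-complete set of eigenfunctions, and that $L^2(M)=\widehat{\bigoplus}_j L^2(B)\otimes\mathbb{C}\phi_j$ with respect to the weighted inner products (the weight $h^k$ on $M$ and $h$ on $\Sigma$ are precisely the ones making $\mathcal{D}_L$ symmetric, which is why these weighted spaces appear). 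Everything else — the interior regularity of $f$, the ellipticity needed to justify differentiating the series near the boundary — is standard and I would only indicate it. The key identities $\grad_M\tilde f=\widetilde{\grad_B f}$ and $\Delta_M\tilde f=\widetilde{Lf}$ stated before the theorem do all the geometric work.
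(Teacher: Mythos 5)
Your proof is correct, and for the half of the argument showing that no other eigenvalues occur it takes a genuinely different route from the paper. Both you and the paper construct the product eigenfunctions $\bar{\vf}_i\tilde{f}_{i,j}$, which settles the inclusion of $\bigcup_\lambda\sigma(L+\lambda h^{-2};\Sigma_{\St},\Sigma_{\Ne})$ into the spectrum of $M$. For the reverse inclusion the paper never decomposes an arbitrary eigenfunction: it instead verifies that the family $\{\bar{\vf}_i\tilde{f}_{i,j}\}$ is orthonormal and spans a dense subspace of $L^2(\Sigma_{\St}\times_h F)$ (by approximating products $\vf f$ in the weighted boundary norm), so that, the Dirichlet-to-Neumann operator being self-adjoint with discrete spectrum, this complete system of eigenfunctions exhausts the spectrum. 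You instead project a given eigenfunction onto each fiber mode, $f_j(x)=\int_F f(x,\cdot)\vf_j\,dV_F$, and show that every nonzero coefficient solves the base problem with the same $\sigma$. This is perfectly viable and in fact needs less analysis than you fear: you never have to sum the series or take traces term by term, since integrating $\Delta_M f=0$ against a single $\vf_j$ over the fiber (using self-adjointness of $\Delta_F$ on the closed fiber and smoothness of $f$ up to the boundary to differentiate under the integral) already gives $(L+\lambda_j h^{-2})f_j=0$ in $B$, $\partial_\nu f_j=0$ on $\Sigma_{\Ne}$ and $\partial_\nu f_j=\sigma f_j$ on $\Sigma_{\St}$; completeness of $\{\vf_j\}$ in $L^2(F)$ is used only to guarantee that some $f_j$ is nonzero. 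The one point you should make explicit is that a nonzero $f_j$ has nonzero trace on $\Sigma_{\St}$, since an eigenfunction of the modified Dirichlet-to-Neumann map is a nonzero function on $\Sigma_{\St}$; this follows from the uniqueness statement in the preliminaries (for $V=\lambda_j h^{-2}\geq 0$, the only solution of $(L+V)u=0$ with $u=0$ on $\Sigma_{\St}$ and $\partial_\nu u=0$ on $\Sigma_{\Ne}$ is $u\equiv 0$). A small slip: the weight making the boundary problems symmetric is $h^k$ on $\Sigma_{\St}$ (the boundary volume element of the warped metric is $h^k\,dV_\Sigma\,dV_F$), not $h$. Your two-inclusion argument gives equality of the spectra as sets, which is exactly what the theorem asserts; the paper's completeness argument buys slightly more, namely an orthonormal eigenbasis and hence the matching of multiplicities.
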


\begin{proof}
Let $0 = \lambda_0(F) < \lambda_1(F) \leq \dots$ be the eigenvalues of the Laplacian on $F$ and consider an orthonormal basis $\{\vf_i\}_{i=0}^{\infty}$ of $L^2(F)$ consisting of eigenfunctions of the Laplacian on $F$. Denote by $q \colon M \to F$ the projection and set $\bar{\vf}_i = \vf_i \circ q$. At a point $(x,y) \in M$, keeping in mind that $B\times \{y\}$ is a totally geodesic submanifold of $M$, we compute
	\begin{equation}\label{Laplacian of lift 2}
		\Delta \bar{\vf}_i = \Delta \vf_i|_{F_x} (y) = h^{-2}(x) \Delta_F \vf_i (y) = \lambda_i(F)\tilde{h}^{-2} \bar{\vf},
	\end{equation}
	where we used that the Riemannian metric of $F_x\subset M$ is the Riemannian metric of $F$ multiplied by the constant $h^2(x)$.

        For any $i \geq 0$, denote by $\sigma_{i,0} \leq \sigma_{i,1} \leq \dots$ the mixed Steklov-Neumann eigenvalues of $L + \lambda_i(F)h^{-2}$ on $B$ and let $\{ f_{i,j} \}_{j = 0}^{\infty}$  be an orthonormal basis of $L^2_{h^k}(\Sigma_{\St})$ consisting of eigenfunctions of the corresponding Dirichlet-to-Neumann operator $\dtn_L$. Extend $f_{i,j}$ to $\hat{f}_{i,j} \in C^\infty(B)$ satisfying
	\begin{equation*}
          \begin{cases}
            (L + \lambda_i(F)h^{-2})\tilde{f}_{i,j} = 0 &\text{ in } B, \\
            \partial_\nu\tilde{f}_{i,j} =  \sigma_{i,j} f_{i,j} &\text{ on } \Sigma_{\St},\\
            \partial_\nu\tilde{f}_{i,j} = 0 & \text{ on } \Sigma_{\Ne}.
          \end{cases}
	\end{equation*}
	
	Setting $\tilde{f}_{i,j} = \hat{f}_{i,j} \circ p$, it follows from (\ref{Laplacian of lift}) and (\ref{Laplacian of lift 2}) that
	\[
	\Delta (\bar{\vf}_i \tilde{f}_{i,j}) = \bar{\vf}_i \Delta \tilde{f}_{i,j} + \tilde{f}_{i,j} \Delta \bar{\vf}_i = \bar{\vf} \widetilde{L\hat{f}}_{i,j} + \lambda_i(F) \tilde{h}^{-2} \tilde{f}_{i,j} \bar{\vf} = 0
	\]
in $M$, while
\begin{equation*}
	\partial_{\nu} (\bar{\vf}_i \tilde{f}_{i,j}) = \bar{\vf}_i \partial_{\nu}  \tilde{f}_{i,j} = \begin{cases}
	\sigma_{i,j} \bar{\vf}_i \tilde{f}_{i,j} &\text{ on } \Sigma_{\St} \times F, \\
	0 &\text{ on } \Sigma_{\Ne} \times F.
	\end{cases}
\end{equation*}
For all $i,j \geq 0$, this means that $\bar{\vf}_i \tilde{f}_{i,j}$ is a mixed Steklov-Neumann eigenfunction of $M$ corresponding to the eigenvalue $\sigma_{i,j}$.

Keeping in mind that
\[
\int_{\Sigma_{\St}\times_h F} u^2\,dV_{\Sigma_{\St}\times_h F} \leq \max_{\partial B} h^k \int_{\Sigma_{\St} \times F} u^2\,dV_{\Sigma_{\St}\times F},
\]
for any $u \in L^2(\Sigma_{\St}\times_h F)$, we readily see that the space spanned by functions of the form $\tilde{\vf} \tilde{f} \in C^\infty(\Sigma_{\St}\times_h F)$ with $\vf \in C^\infty(F)$ and $f \in C^\infty(\Sigma_{\St})$ is dense in $L^2(\Sigma_{\St}\times_h F)$, being dense in $L^2(\Sigma_{\St} \times F)$. Consider now a non-zero function $\vf \in C^\infty(F)$, $f \in C^\infty(\Sigma_{\St})$ and $\ve > 0$. Then there exists $n_0 \in \mathbb{N}$ and $a_0,\dots,a_{n_0} \in \mathbb{R}$ such that
\[
\big\| \vf - \sum_{i=0}^{n_{0}} a_i \vf_i \big\|_{L^2(F)} < \frac{\ve}{2\| f \|_{L^2_{h^k}(\Sigma_{\St})}},
\]
$\{ \vf_i \}_{i=0}^\infty$ being an orthonormal basis of $L^2(F)$. Similarly, for any $i=0,\dots,n_0$ there exist $m(i) \in \mathbb{N}$ and $b_{i,0},\dots,b_{i,m(i)}\in \mathbb{R}$ such that
\[
\big\| f - \sum_{j=0}^{m(i)} b_j f_{i,j} \big\|_{L_{h^k}^2(\Sigma_{\St})} < \frac{\ve}{2(n_0 + 1)\| \vf \|_{L^2(F)}}.
\]
Using that $\| \tilde{\vf} \tilde{f} \|_{L^2(\Sigma_{\St} \times_h F)} = \| \vf \|_{L^2(F)} \| f \|_{L^2_{h^k}(\Sigma_{\St})}$, this gives the estimate
\begin{eqnarray}
\big\| \tilde{\vf} \tilde{f} - \sum_{i=0}^{n_0} \sum_{j=0}^{m(i)} a_i b_{i,j} \tilde{\vf}_i \tilde{f}_{i,j} \big\|_{L^2(\Sigma_{\St} \times_h F)} &\leq& \big\| \big( \tilde{\vf} - \sum_{i=0}^{n_0} a_i \tilde{\vf} \big) \tilde{f} \big\|_{L^2(\Sigma_{\St} \times_h F)} \nonumber\\
&+& \big\| \sum_{i=1}^{n_0} a_i \tilde{\vf}_i \big( f - \sum_{j=0}^{m(i)} b_{i,j} \tilde{f}_{i,j} \big)  \big\|_{L^2(\Sigma_{\St} \times_h F)} \nonumber\\
&<& \ve, \nonumber
\end{eqnarray}
which yields that $\{\bar{\vf}_i \tilde{f}_{i,j}\}_{i,j=0}^\infty$ spans a dense subspace of $L^2(\Sigma_{\St} \times_h F)$. Finally, it is immediate to verify that
	\[
	\int_{\Sigma_{\St} \times_h F} \bar{\vf}_i \tilde{f}_{i,j}  \bar{\vf}_m \tilde{f}_{m,l} = \int_{\Sigma_{\St}} f_{i,j} f_{m,l} h^k \int_{F} \vf_i \vf_m = \delta_{im} \delta_{jl},
	\]
which means that $\{\bar{\vf}_i \tilde{f}_{i,j}\}_{i,j=0}^\infty$ is an orthonormal basis of $L^2(\Sigma_{\St} \times_h F)$ consisting of mixed Steklov-Neumann eigenfunctions.
\end{proof}

\section{\bf Large eigenvalues on product manifolds}\label{Large}

The aim of this section is to prove Theorem~\ref{thm:product} and Theorem~\ref{thm:ProductBoundary}. In order to unify our approach, we will establish an extension of Theorem \ref{thm:product} involving the mixed Steklov-Neumann spectrum. Although it is not assumed in Theorem \ref{thm:product}, throughout most of this section, we focus on the case where the boundary $\Sigma$ of $B^n$ has a neighborhood isometric to $\Sigma \times [0,\ell)$ for some $\ell > 0$. By virtue of Proposition \ref{quasi-isometric}, this is not restrictive for our purposes.

We begin by establishing the existence of diffusion operators with arbitrarily large first non-zero mixed Steklov-Neumann eigenvalue on such a Riemannian manifold $B$, if $n \geq 2$. This is conceptually related to \cite[Theorem 1.1]{CoElGi2019}, where conformal Riemannian metrics are considered, and the proof is quite similar. See Remark~\ref{remark:conformal} for a relevant discussion. For reasons that will become clear in the sequel, we are also interested in certain properties of the function $h$ defining the diffusion operator, which are not discussed in \cite[Theorem 1.1]{CoElGi2019}. For this reason, we present a detailed proof to the following:

\begin{prop}\label{diffusion large first eigenvalue}
Let $B^n$ be a Riemannian manifold of dimension $n\geq 2$ with boundary $\Sigma = \Sigma_{\St} \cup \Sigma_{\Ne}$ as above such that there exists a neighborhood $U$ of\, $\Sigma$ isometric to $\Sigma \times [0,\ell)$. Then for any $0 <\ve < \frac{\ell}{6}$ and any $0 < \delta < 1$ there exists a family of functions $h=h_{\eps,\delta} \in C^{\infty}(B)$ depending only on the distance to $\Sigma$, with $h = 1$ in a neighborhood of\, $\Sigma$, $h \geq \ve^\delta$ in $B$ and $h = \ve^\delta $ in $\Sigma \times [\ve , 2 \ve]$, such that the first non-zero mixed Steklov-Neumann eigenvalue of the operator $L = \Delta - \grad \ln h$ satisfies $\sigma_1(L;\Sigma_{\St} , \Sigma_{\Ne}) \rightarrow + \infty$ as $\ve \rightarrow 0$.
\end{prop}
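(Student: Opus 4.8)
The strategy is to build $h$ as a function of the single variable $t = \operatorname{dist}(\cdot,\Sigma)$ on the collar $U\cong\Sigma\times[0,\ell)$, extended to be identically equal to some small constant on the rest of $B$, and then estimate the Rayleigh quotient from Proposition~\ref{diffusion first non-zero eigenvalue}. First I would fix a smooth profile $\rho = \rho_{\eps,\delta}\colon[0,\ell)\to\R$ with $\rho\equiv 1$ near $t=0$, $\rho$ decreasing to the value $\eps^\delta$ on $[\eps,2\eps]$, then $\rho\equiv\eps^\delta$ on $[2\eps,\ell/2]$ say, and finally $\rho$ returning to $1$ (or any fixed value $\geq\eps^\delta$) before $t=\ell$ so that it glues smoothly to $h\equiv 1$ on $B\setminus U$; the precise shape on the "return" part is irrelevant as long as it stays $\geq\eps^\delta$. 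Set $h(x,t)=\rho(t)$ on the collar. Then $h=1$ near $\Sigma$, $h\geq\eps^\delta$ everywhere, and $h=\eps^\delta$ on $\Sigma\times[\eps,2\eps]$, matching all the stated requirements. The point of the flat slab $\Sigma\times[\eps,2\eps]$ where $h$ is pinned to $\eps^\delta$ is that it acts as a thin "bottleneck'' of exponentially small weight $h\,dV$, which should force the numerator $\int_B \|df\|^2\,h\,dV$ to be large relative to the boundary term $\int_{\Sigma_{\St}} f^2\,h\,dV_\Sigma$ whenever $f$ has zero $h^{\,}$-weighted mean on $\Sigma_{\St}$.

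The heart of the argument is the lower bound on the Rayleigh quotient. Let $f\in C^\infty(B)$ with $\int_{\Sigma_{\St}} f\,h\,dV_\Sigma = 0$. Split $B$ along the slab: write $B = V_{\mathrm{out}}\cup S\cup V_{\mathrm{in}}$ where $S=\Sigma\times[\eps,2\eps]$, $V_{\mathrm{out}}=\Sigma\times[0,\eps]$, and $V_{\mathrm{in}}$ is the remainder. On $S$, $h=\eps^\delta$, so in particular $h$ is bounded above and below there by a fixed multiple of $\eps^\delta$. The key inequality is a one-dimensional trace/Poincaré estimate transverse to the collar: using the fundamental theorem of calculus in the $t$-direction, $f(x,0)$ differs from the average of $f(x,\cdot)$ over $\{x\}\times[\eps,2\eps]$ by at most $\big(\int_0^{2\eps}|\partial_t f|^2\,dt\big)^{1/2}\cdot C$, and $\partial_t$-gradient energy on $V_{\mathrm{out}}\cup S$ dominates. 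Meanwhile on $V_{\mathrm{in}}$, which includes at least one of $\Sigma_{\St}$'s "interior side'' if $\Sigma_{\Ne}\neq\emptyset$, one uses the standard Poincaré (or Sobolev trace) inequality on the fixed manifold $B\setminus V_{\mathrm{out}}$ — with weight comparable to $\eps^\delta$ near $S$ and comparable to $1$ deep inside — to control $\int_S f^2$ by a fixed constant times $\int_{V_{\mathrm{in}}}\|df\|^2 + \int_S f^2\cdot(\text{small})$, or more simply to bound $\int_{\Sigma_{\St}} f^2\,h\,dV_\Sigma$ after subtracting the slab-average of $f$. Chaining these, $\int_{\Sigma_{\St}} f^2\,h\,dV_\Sigma \leq C\,\eps^{\delta}\big(\int_{V_{\mathrm{out}}} \|df\|^2 + \text{fixed lower-order}\big) + \text{(mean-zero contribution)}$, and comparing numerator (which carries the full weight $h$, hence no $\eps^\delta$ loss on $V_{\mathrm{in}}$) with denominator (which on $\Sigma_{\St}$ carries weight $1$ but is controlled by an $\eps^\delta$-small quantity) yields $\sigma_1(L;\Sigma_{\St},\Sigma_{\Ne})\geq c\,\eps^{-\delta}\to\infty$. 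The bookkeeping for the zero-mean condition is what converts the crude trace bound into a genuine gap: without it one only gets the trivial eigenvalue $0$ realized by constants.

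The main obstacle I anticipate is handling the mean-zero constraint together with the transverse slicing cleanly. The zero-mean hypothesis is $\int_{\Sigma_{\St}} f\,h\,dV_\Sigma = 0$, i.e.\ $\int_{\Sigma_{\St}} f\,dV_\Sigma = 0$ since $h\equiv 1$ near $\Sigma$; but the natural quantity produced by the transverse estimate is the average of $f$ over the slab $S$, which need not vanish. One must therefore argue that if the slab-average $\bar a := \fint_S f$ is nonzero, then $f - \bar a$ still has small $L^2(\Sigma_{\St})$ norm unless the Dirichlet energy is large — i.e.\ one essentially subtracts a constant, uses that $\int_{\Sigma_{\St}}(f-\bar a)\,dV_\Sigma = -\bar a\,\Vol_\Sigma(\Sigma_{\St})$ combined with $\int_{\Sigma_{\St}} f\,dV_\Sigma = 0$ to recover $\|f\|_{L^2(\Sigma_{\St})}^2 \leq \|f-\bar a\|_{L^2(\Sigma_{\St})}^2 + \bar a^2\Vol_\Sigma(\Sigma_{\St})$ and then bound $\bar a^2$ itself by the transverse energy via the same fundamental-theorem-of-calculus step applied against the (zero-mean) boundary data. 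This is elementary but fiddly; I would organize it by proving a single clean lemma of the form "$\int_{\Sigma_{\St}} f^2\,dV_\Sigma \leq C\big(\eps\int_{\Sigma\times[0,2\eps]}\|df\|^2 + \eps^{-\delta}\text{-weighted energy on the interior}\big)$ for mean-zero $f$'' and then divide. A secondary technical point is the case $n=2$ versus $n\geq 3$: the proof should not use any Sobolev embedding that degenerates in low dimension, and indeed the transverse one-dimensional estimate does not, which is precisely why — unlike the conformal perturbations discussed in Remark~\ref{remark:conformal} — this construction survives in dimension $n=2$.
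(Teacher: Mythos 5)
Your construction of $h$ has a fatal flaw: you keep $h$ bounded above independently of $\eps$ (you propose $h\equiv 1$ near $\Sigma$, $h=\eps^\delta$ on the slab, and then $h$ returning to $1$ or ``any fixed value'' on the rest of $B$). With such a choice the conclusion is simply false. Indeed, since $h\equiv 1$ on $\Sigma$, the mean-zero constraint $\int_{\Sigma_{\St}} f\,h\,dV_\Sigma=0$ is the same as for the unweighted problem, and since $h\leq 1$ in $B$, plugging the first nonzero unweighted Steklov--Neumann eigenfunction $f^*$ into the quotient of Proposition~\ref{diffusion first non-zero eigenvalue} gives
\[
\sigma_1(L;\Sigma_{\St},\Sigma_{\Ne})\;\leq\;\frac{\int_B\|df^*\|^2\,h\,dV_B}{\int_{\Sigma_{\St}}(f^*)^2\,h\,dV_\Sigma}\;\leq\;\frac{\int_B\|df^*\|^2\,dV_B}{\int_{\Sigma_{\St}}(f^*)^2\,dV_\Sigma}\;=\;\sigma_1(\Delta;\Sigma_{\St},\Sigma_{\Ne}),
\]
a fixed finite number independent of $\eps$. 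So no choice of bounded $h$ can make $\sigma_1(L)\to+\infty$; shrinking $h$ in a slab only \emph{decreases} the numerator, and your heuristic that a light bottleneck ``forces the numerator to be large relative to the boundary term'' is backwards (light bottlenecks make eigenvalues small, as in Cheeger dumbbells). The small value $\eps^\delta$ on $\Sigma\times[\eps,2\eps]$ required in the statement is not the mechanism of blow-up at all; it is needed later, in Proposition~\ref{aux result}, where $h^{-2}$ enters as a potential.

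The missing idea is that $h$ must become very \emph{large} away from the boundary: the paper takes $h=\eps^{-2}$ on $B\smallsetminus(\Sigma\times[0,3\eps))$ (and only requires $h\geq\eps^\delta$ on the transition layer). The proof then expands $f$ along the collar in eigenfunctions of $\Sigma_{\St}$ and runs a dichotomy mode by mode: either the coefficient $a_j$ drops by half within the thin collar, in which case the transverse energy, weighted by $h\geq\eps^\delta$ over a length $\sim\eps$, already contributes $\gtrsim\eps^{\delta-1}a_j(0)^2$; or $a_j$ persists into the heavy region $h=\eps^{-2}$, where for oscillating modes the tangential term contributes $\gtrsim\lambda_{b+1}(\Sigma_{\St})\,\eps^{-1}a_j(0)^2$, and for the locally constant modes (several components of $\Sigma_{\St}$) the zero-mean condition forces two components to carry averages of opposite sign deep inside, so that a Neumann-gap estimate on the interior (Lemma 3.4 of \cite{CoElGi2019}), amplified by the weight $\eps^{-2}$, gives the bound. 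None of this can be recovered from your bounded-weight construction, so the proposal does not prove the proposition; the handling of the mean-zero constraint you worry about is indeed needed, but only inside this large-weight dichotomy, not as a fix to the trace estimate.
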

\begin{rem}
  The control of the function $h_{\eps,\delta}$ will be used in the proof of Proposition~\ref{aux result}, where the constant $\delta$ will be chosen so that $k/n<\delta<1$ to insure that a natural dichotomy leads to arbitrarily large lower bound.
\end{rem}
\begin{proof}
Without loss of generality, we assume $\ell < 1$.
  Choose $h \in C^{\infty}(B)$ depending only on the distance to $\Sigma$, with $h = 1$ in $\Sigma \times [0 ,\ve/2]$, $h \geq \ve^\delta$ in $B$, $h = \ve^\delta$ in $\Sigma \times [\ve , 2 \ve]$, and $h = \ve^{-2}$ in $B \smallsetminus (\Sigma \times [0,3\ve))$. In view of Proposition~\ref{diffusion first non-zero eigenvalue}, it suffices to show that there exists $C(\ve) > 0$ such that $C(\ve) \rightarrow + \infty$ as $\ve \rightarrow 0$, satisfying
\begin{equation}\label{energy estimate}
\mathcal{R}(f):=\int_B \| d f \|^2\,hdV_B \geq C(\ve)
\end{equation}
for any $f \in C^\infty(B)$ with $\int_{\Sigma_{\St}} f^2\,dV_\Sigma = 1$ and $\int_{\Sigma_{\St}} f\,dV_\Sigma = 0$. Denote by $\Sigma_j$ the connected components of $\Sigma_{\St}$, $0 \leq j \leq b$, and let
$$0 = \lambda_0(\Sigma_{\St})=\cdots=\lambda_b(\Sigma_{\St})<\lambda_{b+1}(\Sigma_{\St}) \leq \lambda_{b+2}(\Sigma_{\St})\leq\dots$$
be the eigenvalues of the Laplacian on $\Sigma_{\St}$ and let $\{\vf_j\}_{j=0}^{+\infty}$ be an orthonormal basis of $L^2(\Sigma_{\St})$ consisting of eigenfunctions, where
\begin{equation*}
	{\vf}_j = \begin{cases}
		|\Sigma_j|^{-1/2} & \text{ in } \Sigma_j, \\
		0 &\text{ in } \Sigma_{\St} \smallsetminus \Sigma_j,
	\end{cases}
\end{equation*}
for $0\leq j \leq b$. We write points of $U$ in the form $(x,t)$ with $x \in \Sigma$ and $0 \leq t < \ell$. Then a test function $f$ (as above) restricted to $\Sigma_{\St} \times [0,\ell)$ is developed in Fourier series as
\[
f(x,t) = \sum_{j \geq 0} a_j(t) {\vf}_j(x), \text{ where } a_j(t) = \int_{\Sigma_{\St} \times \{t\}} f {\vf}_j\,dV_\Sigma.
\]
The conditions $\int_{\Sigma_{\St}} f^2\,dV_\Sigma = 1$ and $\int_{\Sigma_{\St}} f\,dV_\Sigma = 0$ satisfied by $f$ may be rewritten as
\[
\sum_{j \geq 0} a_j^2(0) = 1 \text{ and } \sum_{j \leq b} a_j(0)|\Sigma_j|^{1/2} = 0.
\]
In particular, the situation when the boundary $\Sigma$ is connected corresponds to $b=0$, with $a_0(0)=0$.
The fact that
\[
df(x,t) = \sum_{j \geq 0} \bigl(a_j^\prime(t) {\vf}_j(x) \dt + a_j(t) d_{\Sigma_{\St}} {\vf}_j(x)\bigr)
\]
in $U$, gives that
\begin{gather}\label{ineq:RayleighProof}
  \mathcal{R}(f) \geq \sum_{j \geq 0} \int_{0}^\ell \bigl( a_j^\prime(t)^2 + a_j^2(t) \lambda_j( \Sigma_{\St}) \bigr) h(t) \dt,
\end{gather}
where we used that $h$ depends only on the distance to $\Sigma$, and by abuse of notation write $h(x,t) = h(t)$ for any $(x,t) \in U$.

Fix $j \geq 0$. If there exists $t_0 \in (3 \ve,4 \ve)$ such that $| a_j(t_0) | \leq |a_j(0)|/2$, then
\begin{gather}\label{ineq:proofterm1}
  \int_0^\ell a_j^\prime(t)^2 h(t) \dt \geq \ve^{\delta} \int_{0}^{t_0} a_j^\prime(t)^2 \dt \geq \frac{\ve^{\delta}}{t_0} \left( \int_0^{t_0} a_j^\prime(t) \dt \right)^2 \geq \frac{\ve^{\delta - 1}}{16} a_j^{2}(0),
\end{gather}
where we used the Cauchy--Schwarz inequality and that $h \geq \ve^\delta$ in $B$. Otherwise, that is, if $|a_j(t)| \geq |a_j(0)|/2$ for any $t \in (3 \ve,4\ve)$, we obtain from $h = \ve^{-2}$ in $\Sigma \times [3 \ve,4 \ve]$ that

\begin{gather}\label{ineq:proofterm2}
  \int_{0}^\ell a_j^2(t) \lambda_j(\Sigma_{\St}) h(t) \dt \geq \frac{a_j^2(0)\lambda_j( \Sigma_{\St})}{4} \int_{3 \ve}^{4 \ve} h(t) \dt = \frac{\lambda_j(\Sigma_{\St})}{4 \ve} a_j^{2}(0) 
\end{gather}
In both cases, it follows from \eqref{ineq:proofterm1} and \eqref{ineq:proofterm2} via \eqref{ineq:RayleighProof} that
\[
\int_{0}^\ell ( a_j^\prime(t)^2 + a_j^2(t) \lambda_j( \Sigma_{\St}) ) h(t) \dt \geq C_1 \ve^{\delta - 1} a_j^{2}(0)
\]
for any $j > b$, where
\[
 C_1 = \min\left\{ \frac{1}{16} , \frac{\lambda_{b+1}(\Sigma_{\St})}{4} \right\} .
\]
This yields that
\[
\mathcal{R}(f) \geq C_1 \ve^{\delta - 1} \sum_{j > b} a_j^2(0) =  C_1 \ve^{\delta - 1} \left( 1 - \sum_{j \leq b} a_j^2(0)\right),
\]
and thus, $\mathcal{R}(f) \geq C_1 \ve^{\delta - 1}/2$ if $\sum_{j \leq b} a_j^2(0) \leq 1/2$. It is worth to point out that this holds if $\Sigma_{\St}$ is connected, that is, if $b=0$.

So it remains to deal with the case where  $\sum_{j \leq b} a_j^2(0) > 1/2$. This implies that there exists some $0 \leq i \leq b$ such that $a_i^2(0) \geq 1/(2(b+1))$. Hence, without loss of generality, we may suppose that $a_0(0) \geq 1/\sqrt{2(b+1)} > 0$. Similarly, since
\[
\sum_{i=1}^b a_j(0) |\Sigma_j|^{1/2} = - a_0(0) |\Sigma_0|^{1/2},
\]
we may assume that
\[
 a_1(0) |\Sigma_1|^{1/2} \leq - \frac{|\Sigma_0|^{1/2}}{b \sqrt{2(b+1)}} < 0.
\]

If there exists $t_0 \in (\ell/2,\ell) \subset (3\ve,\ell)$ such that $a_0(t_0) \leq a_0(0)/2$, keeping in mind that $h \geq \ve^\delta$ in $B$ and $h = \ve^{-2}$ in $B \smallsetminus (\Sigma \times [0,3\ve))$, we calculate
\begin{eqnarray}
	\mathcal{R}(f) &\geq& \int_{0}^{t_0} a_0^\prime(t)^2 h(t)\dt \geq \ve^\delta \int_0^{3\ve} a_0^\prime(t)^2 \dt + \ve^{-2} \int_{3\ve}^{t_0} a_0^\prime(t)^2\dt \nonumber\\
	&\geq& \frac{\ve^{\delta - 1}}{3} \left(\int_0^{3\ve}a_0^\prime(t) \dt \right)^2 + \frac{\ve^{-2}}{\ell} \left(\int_{3\ve}^{t_0}a_0^\prime(t) \dt \right)^2 \nonumber\\
	& \geq & \frac{\ve^{\delta - 1}}{3} \big((a_0(3\ve) - a_0(0))^2 + (a_0(t_0) - a_0(3\ve))^2\big) \nonumber \\
	&\geq& \frac{\ve^{\delta-1}}{6} (a_0(t_0) - a_0(0))^2 \geq \frac{\ve^{\delta - 1}}{48(b+1)}, \nonumber
\end{eqnarray}
where we used that $\ve <  1$ and $\ell < 1$.
Arguing in a similar way, if there exists $t_0 \in (\ell/2,\ell) \subset (3\ve,\ell)$ such that $a_1(t_0) \geq a_1(0)/2$, we derive that
\[
\mathcal{R}(f) \geq \int_{0}^{t_0} a_1^\prime(t)^2 h(t) \dt \geq \frac{|\Sigma_0| \ve^{\delta - 1}}{48 b^2 (b+1)|\Sigma_1|}.
\]
In the aforementioned cases, it is evident that $\mathcal{R}(f) \geq C_2 \ve^{\delta - 1}$, where
\[
C_2 = \min\left\{ \frac{1}{48(b+1)} ,  \frac{1}{48 b^2 (b+1)} \min_{0 \leq i,j \leq b} \frac{|\Sigma_i|}{|\Sigma_j|} \right\}.
\]

In the remaining case, that is, if $a_0(t) > a_0(0) / 2$ and $a_1(t) < a_1(0)/2$ for any $t \in (\ell/2,  \ell)$, the mean value of $f$ satisfies
		\[
\fint_{\Sigma_0 \times (\ell/2, \ell)} f\,dV= \frac{|\Sigma_0|^{1/2}}{| \Sigma_0 \times (\ell/2 , \ell)  |} \int_{\ell/2}^{\ell} a_0(t) \dt \geq \frac{ a_0(0)}{2|\Sigma_0|^{1/2}} > 0
\]
and
\[
\fint_{\Sigma_1 \times (\ell/2 , \ell)} f\,dV= \frac{|\Sigma_1|^{1/2}}{| \Sigma_1 \times (\ell/2 , \ell)  |} \int_{\ell/2}^{\ell} a_1(t) \dt \leq \frac{a_1(0)}{2|\Sigma_1|^{1/2}}  < 0.
\]
We derive from \cite[Lemma 3.4]{CoElGi2019} applied to $f$ on $M = B \smallsetminus \Sigma \times [0,\ell/2)$ that
\[
\int_{M} \| df \|^2\,dV_B \geq \frac{\ell \mu(M) }{16} \min\{ |\Sigma_0| ,|\Sigma_1| \} \left( \frac{a_0(0)}{|\Sigma_0|^{1/2}}  - \frac{ a_1(0)}{|\Sigma_1|^{1/2}}  \right)^{2},
\]
where $\mu(M)$ stands for the first non-zero Neumann eigenvalue of $M$. Since 
\[
\frac{a_0(0)}{|\Sigma_0|^{1/2}}  - \frac{a_1(0)}{|\Sigma_1|^{1/2}}  \geq  \frac{a_0(0)}{|\Sigma_0|^{1/2}} \geq \frac{1}{|\Sigma_0|^{1/2} \sqrt{2(b+1)}},
\]
and $h = \ve^{-2}$ in $M$
we deduce that 
\[
\mathcal{R}(f) \geq \ve^{-2} \int_{M} \| df \|^2\,dV_B \geq C_3 \ve^{-2} \geq C_{3} \ve^{\delta - 1},
\]
where
\[
C_3 = \frac{\ell \mu(M) }{32 (b+1)} \min_{0 \leq i,j\leq b} \frac{|\Sigma_i|}{|\Sigma_j|}.
\]
We conclude that (\ref{energy estimate}) holds for $C(\ve) = \min\{C_1/2,C_2,C_3\} \ve^{\delta -1}$, which completes the proof.
\end{proof}

\begin{prop}\label{aux result}
  Let $M$ be the Riemannian product of a compact manifold $B^n$ with boundary $\Sigma = \Sigma_{\St} \cup \Sigma_{\Ne}$ and let $F^k$ be a closed manifold, where $n > k\geq 1$. Suppose that $\Sigma$ has a neighborhood $U\subset B$ isometric to $\Sigma \times [0,\ell)$. Then for any $\ve > 0$ there exists a positive function $h_\eps \in C^{\infty}(B)$ with $h = 1$ in a neighborhood of\, $\Sigma$, such that the first non-zero mixed Steklov-Neumann eigenvalue of $M$ with respect to the Riemannian metric
  $$g_{\ve} = h_\eps^{-2k/n} g_B + h_\eps^2 g_{F}$$
  satisfies
  $\sigma_1(M,g_{\ve};\Sigma_{\St} \times F , \Sigma_{\Ne} \times F) \rightarrow + \infty$ as $\ve \rightarrow 0$, while the volume element $dV_{M,g_\eps}$ does not depend on $\eps$.
\end{prop}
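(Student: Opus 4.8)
The plan is to realize $g_\ve$ as a warped product and apply Theorem~\ref{mixed Steklov-Neumann spectrum warped product}. Setting $\tilde{g}_B := h_\ve^{-2k/n} g_B$, the metric $g_\ve = \tilde{g}_B + h_\ve^2 g_F$ is the warped product $(B,\tilde{g}_B) \times_{h_\ve} F$. In product coordinates $\det g_\ve = (h_\ve^{-2k/n})^{n}\det g_B \cdot (h_\ve^{2})^{k}\det g_F = \det g_B \det g_F$, so $dV_{M,g_\ve} = dV_{M,g_M}$ no matter what $h_\ve$ is; this settles the statement about the volume element. Theorem~\ref{mixed Steklov-Neumann spectrum warped product} then gives
\[
\sigma(M,g_\ve;\Sigma_{\St}\times F,\Sigma_{\Ne}\times F) = \bigcup_{\lambda\in\sigma_{\Delta}(F)} \sigma\bigl(L + \lambda h_\ve^{-2};\Sigma_{\St},\Sigma_{\Ne}\bigr),
\]
where $L = \Delta_{\tilde{g}_B} - \grad_{\tilde{g}_B}\ln h_\ve^{k}$ is a diffusion operator on $(B,\tilde{g}_B)$. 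The branch $\lambda = 0$ contributes $0$ (constant functions), with multiplicity one because $B$ is connected, followed by $\sigma_1(L;\Sigma_{\St},\Sigma_{\Ne})$; for $\lambda > 0$ the potential $\lambda h_\ve^{-2}$ is positive, so by Proposition~\ref{Laplace type first eigenvalue} that branch starts at $\sigma_0(L + \lambda h_\ve^{-2};\Sigma_{\St},\Sigma_{\Ne}) > 0$, which is non-decreasing in $\lambda$. Since $F$ is closed and connected, $\lambda_1(F) > 0$, and hence
\[
\sigma_1(M,g_\ve;\Sigma_{\St}\times F,\Sigma_{\Ne}\times F) = \min\bigl\{ \sigma_1(L;\Sigma_{\St},\Sigma_{\Ne}),\ \sigma_0\bigl(L + \lambda_1(F)h_\ve^{-2};\Sigma_{\St},\Sigma_{\Ne}\bigr)\bigr\}.
\]
So it suffices to force both terms to $+\infty$ as $\ve \to 0$.

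Next I would normalize away the $\ve$-dependence of the base metric. From $\tilde{g}_B = h_\ve^{-2k/n}g_B$ one has $dV_{\tilde{g}_B} = h_\ve^{-k}dV_{g_B}$, hence $h_\ve^{k}\,dV_{\tilde{g}_B} = dV_{g_B}$, and $\|df\|^2_{\tilde{g}_B} = h_\ve^{2k/n}\|df\|^2_{g_B}$; moreover $h_\ve \equiv 1$ near $\Sigma$, where also $\tilde{g}_B = g_B$, so the boundary integrals below are with respect to $g_B$. Propositions~\ref{diffusion first non-zero eigenvalue} and~\ref{Laplace type first eigenvalue} then give
\[
\sigma_1(L;\Sigma_{\St},\Sigma_{\Ne}) = \min_{f}\frac{\int_B h_\ve^{2k/n}\|df\|^2_{g_B}\,dV_{g_B}}{\int_{\Sigma_{\St}} f^2\,dV_{\Sigma,g_B}},\qquad \int_{\Sigma_{\St}} f\,dV_{\Sigma,g_B} = 0,
\]
which is precisely the first non-zero mixed Steklov-Neumann eigenvalue on $(B,g_B)$ of the diffusion operator with weight $h_\ve^{2k/n}$, and
\[
\sigma_0\bigl(L + \lambda h_\ve^{-2};\Sigma_{\St},\Sigma_{\Ne}\bigr) = \min_{f}\frac{\int_B h_\ve^{2k/n}\|df\|^2_{g_B}\,dV_{g_B} + \lambda\int_B h_\ve^{-2} f^2\,dV_{g_B}}{\int_{\Sigma_{\St}} f^2\,dV_{\Sigma,g_B}},
\]
the minimum here over all non-zero $f \in C^\infty(B)$.

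Then I would fix the weight. As $k < n$ we have $n \geq 2$ and $(k/n,1) \neq \emptyset$, so I choose $\delta$ with $k/n < \delta < 1$ and set $h_\ve := h_{\ve,\delta}^{n/(2k)}$, where $h_{\ve,\delta}$ is the function supplied by Proposition~\ref{diffusion large first eigenvalue}; then $h_\ve \equiv 1$ near $\Sigma$ as required, and $h_\ve^{2k/n} = h_{\ve,\delta}$. Consequently the displayed formula for $\sigma_1(L;\Sigma_{\St},\Sigma_{\Ne})$ is exactly the quantity that Proposition~\ref{diffusion large first eigenvalue} shows to diverge, so $\sigma_1(L;\Sigma_{\St},\Sigma_{\Ne}) \to +\infty$.

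The main obstacle is the branch $\lambda = \lambda_1(F) > 0$, which I would treat by the dichotomy foreshadowed in the remark after Proposition~\ref{diffusion large first eigenvalue}. Take $f \in C^\infty(B)$ with $\int_{\Sigma_{\St}} f^2\,dV_\Sigma = 1$, write points of the collar as $(x,t)\in\Sigma\times[0,\ell)$, and set $\phi(t) = \int_{\Sigma_{\St}\times\{t\}} f^2\,dV_\Sigma$, so $\phi(0)=1$. If $\phi(t) \geq \tfrac12$ for every $t \in [\ve,2\ve]$, then, using $h_\ve^{-2} = h_{\ve,\delta}^{-n/k} = \ve^{-n\delta/k}$ there, the potential term is at least $\tfrac{\lambda_1(F)}{2}\ve^{1-n\delta/k}$, which tends to $+\infty$ because $\delta > k/n$. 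Otherwise $\phi(t_0) \leq \tfrac12$ for some $t_0 \in [\ve,2\ve]$, and then Minkowski's integral inequality gives $\int_0^{t_0}\|\partial_t f(\cdot,t)\|_{L^2(\Sigma_{\St})}\,\dt \geq 1 - 2^{-1/2}$, whence Cauchy--Schwarz yields $\int_0^{t_0}\|\partial_t f\|^2_{L^2(\Sigma_{\St})}\,\dt \geq (1-2^{-1/2})^2/(2\ve)$; since $\|df\|^2_{g_B} \geq |\partial_t f|^2$ on the collar and $h_\ve^{2k/n} = h_{\ve,\delta} \geq \ve^{\delta}$ on $B$, the Dirichlet term is at least $\tfrac12(1-2^{-1/2})^2\ve^{\delta-1}$, which tends to $+\infty$ because $\delta < 1$. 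In either case the Rayleigh quotient exceeds a quantity $C(\ve) \to +\infty$, so $\sigma_0(L + \lambda_1(F)h_\ve^{-2};\Sigma_{\St},\Sigma_{\Ne}) \to +\infty$. Together with the previous step this gives $\sigma_1(M,g_\ve;\Sigma_{\St}\times F,\Sigma_{\Ne}\times F)\to+\infty$, and the remaining verifications---the two displayed identities and the collar estimates---are routine.
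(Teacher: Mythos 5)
Your proposal is correct and follows the paper's own proof in all essential respects: realizing $g_\ve$ as the warped product $(B,h_\ve^{-2k/n}g_B)\times_{h_\ve}F$ (which makes the volume element claim immediate), applying Theorem~\ref{mixed Steklov-Neumann spectrum warped product} and monotonicity in $\lambda$ to reduce to $\min\{\sigma_1(L;\Sigma_{\St},\Sigma_{\Ne}),\sigma_0(L+\lambda_1(F)h_\ve^{-2};\Sigma_{\St},\Sigma_{\Ne})\}$, rewriting both as weighted Rayleigh quotients on $(B,g_B)$, and choosing $h_\ve^{2k/n}=h_{\ve,\delta}$ with $k/n<\delta<1$ from Proposition~\ref{diffusion large first eigenvalue}. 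The only (harmless) deviation is in the $\lambda_1(F)$ branch, where you run the collar dichotomy globally on $t\mapsto\|f(\cdot,t)\|_{L^2(\Sigma_{\St})}$ via Minkowski's integral inequality, while the paper runs the same dichotomy mode-by-mode after a Fourier expansion in eigenfunctions of $\Sigma_{\St}$; both give a lower bound of the form $\min\bigl\{c\,\ve^{\delta-1},\,c'\,\ve^{1-\delta n/k}\bigr\}\to+\infty$.
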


\begin{proof} 
	Let $h \in C^{\infty}(B)$ be a positive function with $h=1$ on $\Sigma$. Then $M$ endowed with the Riemannian metric $g^\prime = h^{-2k/n} g_B + h^2 g_{F}$ is isometric to the warped product $(B,g_B^\prime) \times_h F$, where $g_B^\prime = h^{-2k/n}g_B$. Considering the diffusion operator $L^\prime = \Delta_{g^\prime_B} - \grad_{g^\prime_B} \ln h^k$ on $(B,g_B^\prime)$, Theorem \ref{thm:SteklovSpectrumWarpedProduct} asserts that the mixed Steklov-Neumann spectrum of $(M,g^\prime)$ is the union of $\sigma(L^\prime + \lambda h^{-2};\Sigma_{\St},\Sigma_{\Ne})$ with $\lambda \in \sigma_{\Delta}(F)$. It follows from Proposition \ref{Laplace type first eigenvalue} that $\sigma_0(L^\prime + \lambda h^{-2};\Sigma_{\St},\Sigma_{\Ne})$ is non-decreasing with respect to $\lambda$, which yields that
	\[
	\sigma_1(M, g^\prime;\Sigma_{\St} \times F , \Sigma_{\Ne} \times F ) = \min\{ \sigma_1(L^\prime;\Sigma_{\St},\Sigma_{\Ne}) , \sigma_{0}(L^\prime + \lambda_1(F) h^{-2} ; \Sigma_{\St} , \Sigma_{\Ne}) \},
	\]
	where $\lambda_1(F)$ is the first non-zero eigenvalue of the Laplacian on $F$. Proposition \ref{diffusion first non-zero eigenvalue} states that
          \[
            \sigma_1(L^\prime;\Sigma_{\St},\Sigma_{\Ne}) = \min_{f} \int_{B} \| df \|_{g_B^\prime}^2 h^k\,dV_{B,g_B^\prime} = \min_{f} \int_{B} \| df \|_{g_B}^2 h^{2k/n}\,dV_B,
          \]
          where the minimum is taken over all $f \in C^{\infty}(B)$ satisfying $\int_{\Sigma_{\St}} f^2\,dV_\Sigma = 1$ and $\int_{\Sigma_{\St}} f\,dV_\Sigma = 0$.
        Considering the diffusion operator $L = \Delta - \grad \ln h^{2k/n}$ on $B$ endowed with $g_B$, it follows from Proposition \ref{diffusion first non-zero eigenvalue} that
        $$\sigma_1(L^\prime ; \Sigma_{\St} , \Sigma_{\Ne}) = \sigma_1(L;\Sigma_{\St},\Sigma_{\Ne}).$$
        See also Remark~\ref{remark:conformal}.
        Fix $ k/n < \delta < 1$. We know from Proposition \ref{diffusion large first eigenvalue} that any $0 <\ve < \ell/6$ there exists $h \in C^{\infty}(B)$ depending only on the distance to $\Sigma$, with $h = 1$ in a neighborhood of $\Sigma$, $h^{2k/n} \geq \ve^\delta$ in $B$ and $h^{2k/n} = \ve^\delta $ in $\Sigma \times [\ve , 2 \ve]$, such that $\sigma_1(L^\prime;\Sigma_{\St} , \Sigma_{\Ne}) = \sigma_1(L;\Sigma_{\St},\Sigma_{\Ne}) \rightarrow + \infty$ as $\ve \rightarrow 0$.
	
Hence, it remains to prove that for such choice of $h \in C^\infty(N)$ we have that $\sigma_0(L^\prime + \lambda_1(F) h^{-2} ; \Sigma_{\St},\Sigma_{\Ne}) \rightarrow + \infty$ as $\ve \rightarrow 0$. We obtain from Proposition \ref{Laplace type first eigenvalue} that
\begin{eqnarray}
  \sigma_0(L^\prime + \lambda_1(F) h^{-2} ; \Sigma_{\St},\Sigma_{\Ne}) &=& \min_{f} \int_{B} (\| df \|_{g^\prime}^2 h^k + \lambda_1(F) f^2  h^{k-2})\,dV_{B,g^\prime}\nonumber \\
                                                                       &=& \min_{f} \int_{B} (\| df \|_{g_B}^2 h^{2k/n} + \lambda_1(F)  f^2 h^{-2})\,dV_B, \nonumber
\end{eqnarray}
where the minimum is taken over all $f \in C^\infty(B)$ with $\int_{\Sigma_{\St}} f^2\,dV_\Sigma = 1$. Consider $f \in C^{\infty}(B)$ with $\int_{\Sigma_{\St}} f^2\,dV_\Sigma = 1$, and set
	\[
	\mathcal{R}(f) := \int_{B} (\| df \|_{g_B}^2 h^{2k/n} + \lambda_1(F)  f^2 h^{-2})\,dV_B.
	\]
	Denote by $0=\lambda_0(\Sigma) \leq \lambda_1(\Sigma) \leq \dots$ the eigenvalues of the Laplacian on $\Sigma_{\St}$ and let $\{\vf_j\}_{j=0}^{+\infty}$ be an orthonormal basis of $L^2(\Sigma_{\St})$ consisting of eigenfunctions. Writing points of $U$ in the form $(x,t)$ with $x \in \Sigma$ and $0 \leq t < \ell$, the restriction of $f$ to $\Sigma_{\St} \times [0,\ell)$ is developed in Fourier series as
	\[
	f(x,t) = \sum_{j \geq 0} a_j(t) {\vf}_j(x), \text{ where } a_j(t) = \int_{\Sigma_{\St} \times \{t\}} f {\vf}_j\,dV_\Sigma.
	\]
Since $\int_{\Sigma_{\St}} f^2\,dV_\Sigma = 1$, we readily see that $\sum_{j \geq 0} a_j(0)^2 = 1$. From the fact that
	\[
	df(x,t) = \sum_{j \geq 0} (a_j^\prime(t) {\vf}_j(x) \dt + a_j(t) d_{\Sigma}{\vf}_j(x))
	\]
	in $\Sigma_{\St} \times [0,\ell)$, writing $h(x,t)=h(t)$ for $(x,t) \in U$, we derive that
        \begin{eqnarray}
          \mathcal{R}(f) &\geq& \int_{U} (\| df \|^2 h^{2k/n} + \lambda_1(F)  f^2 h^{-2})\,dV_B \nonumber\\
	    &=& \sum_{j \geq 0} \int_0^\ell ((a_j^\prime(t)^2  + \lambda_j(\Sigma) a_j(t)^2) h(t)^{2k/n} +  \lambda_1(F)  a_j(t)^2 h(t)^{-2}) \dt \nonumber \\
	    &\geq& \sum_{j \geq 0} \int_0^\ell (a_j^\prime(t)^2 h(t)^{2k/n}   + \lambda_1(F) a_j(t)^2 h(t)^{-2} ) \dt. \nonumber
	  \end{eqnarray}
          Fix $j \geq 0$. If there exists $t_0 \in (\ve , 2 \ve )$ such that $| a_j(t_0) | < |a_j(0)|/2$, then
	\[
	\int_{0}^{\ell}  a_j^\prime(t)^2 h(t)^{2k/n} \dt \geq \ve^\delta  \int_{0}^{t_0} a_j^\prime(t)^2\dt \geq \frac{\ve^\delta}{t_0} \bigg(\int_{0}^{t_0} a_j^\prime(t) \dt \bigg)^2 \geq \frac{\ve^{\delta - 1}}{8} a_j(0)^2.
	\]
	Otherwise, we have that $|a_j(t)| \geq |a_j(0)|/2$ for all $t \in (\ve , 2 \ve)$, which yields that
	\[
	\int_0^{\ell} \lambda_1(F)  a_j(t)^2 h(t)^{-2} \dt \geq \frac{a_j(0)^2}{4} \lambda_1(F) \int_{\ve}^{2\ve} h(t)^{-2} \dt =\frac{\lambda_1(F) \ve^{1 - \delta n/k}}{4}a_j(0)^2,
	\]
	where we used that $h^{2k/n} = \ve^{\delta}$ in $\Sigma \times [\ve , 2 \ve]$. In any case, we obtain that
	\[
	\int_0^\ell (a_j^\prime(t)^2 h(t)^{2k/n}   + \lambda_1(F) a_j(t)^2 h(t)^{-2} ) \dt \geq C(\ve) a_j^2(0)
	\]
	for any $j \geq 0$, where
	\[
	C(\ve) = \min\left\{ \frac{\ve^{\delta - 1}}{8} ,  \frac{\lambda_1(F) \ve^{1 - \delta n /k}}{4}  \right\}.
	\]
This, together with $\sum_{j \geq 0} a_j(0)^2 = 1$, gives that $\mathcal{R}(f) \geq C(\ve)$, and thus,
	\[
	\sigma_0(L^\prime + \lambda_1(F)h^{-2};\Sigma_{\St},\Sigma_{\Ne}) \geq C(\ve) \rightarrow + \infty
	\]
as $\ve \rightarrow 0$, from the choice of $\delta$.
\end{proof}

We are ready to establish the following generalization of Theorem \ref{thm:product}.

\begin{theorem}\label{thm:product mixed}
Let $(M,g)$ be the Riemannian product of a compact manifold $B^n$ with boundary $\Sigma = \Sigma_{\St} \cup \Sigma_{\Ne}$ and let $F^k$ be a closed manifold, where $n > k\geq 1$. Then for any $\ve > 0$ there exists a Riemannian metric $g_\ve$ on $M$ having the same volume element as $g$, which coincides with $g$ in a neighborhood of\, $\Sigma$, such that $\sigma_1(M,g_{\ve};\Sigma_{\St} \times F , \Sigma_{\Ne} \times F) \rightarrow + \infty$ as $\ve \rightarrow 0$.
\end{theorem}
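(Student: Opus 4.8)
The plan is to deduce Theorem~\ref{thm:product mixed} from Proposition~\ref{aux result} by removing, with the help of Proposition~\ref{quasi-isometric}, the auxiliary hypothesis that $\Sigma$ admits a product collar, and to do so in a way that keeps the volume element of $g$ and the prescribed boundary behaviour.

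First I would fix, once and for all (independently of $\ve$), a Riemannian metric $\hat g_B$ on $B$ that is quasi-isometric to $g_B$ with some ratio $C = C(g_B) \geq 1$ and for which $\Sigma$ has a neighborhood isometric to $\Sigma \times [0,\ell)$. Such a metric is easily produced: fix a collar embedding of $\Sigma \times [0,2\ell)$ in $B$, take $\hat g_B$ equal to the product metric $g_\Sigma + dt^2$ on $\Sigma \times [0,\ell)$, convexly interpolate to $g_B$ on $\Sigma \times (\ell, 3\ell/2)$, and keep $\hat g_B = g_B$ on the rest of $B$; the resulting quasi-isometry ratio depends only on $g_B$ and on these fixed choices. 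Applying Proposition~\ref{aux result} to the Riemannian product $(B,\hat g_B) \times (F, g_F)$ then furnishes, for each $\ve > 0$, a positive function $h_\ve \in C^\infty(B)$ with $h_\ve \equiv 1$ in a neighborhood of $\Sigma$ such that the metric $\hat g_\ve = h_\ve^{-2k/n}\hat g_B + h_\ve^2 g_F$ on $M$ satisfies $\sigma_1(M,\hat g_\ve; \Sigma_{\St}\times F, \Sigma_{\Ne}\times F) \to +\infty$ as $\ve \to 0$.

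Next I would introduce the metric that actually proves the theorem: $g_\ve := h_\ve^{-2k/n} g_B + h_\ve^2 g_F$ on $M$, built from the \emph{same} function $h_\ve$ but over the original base metric $g_B$ rather than $\hat g_B$. Since $h_\ve \equiv 1$ near $\Sigma$, we get $g_\ve = g_B + g_F = g$ in a neighborhood of $\partial M = \Sigma \times F$; and since the two warping factors contribute $(h_\ve^{-2k/n})^{n/2}(h_\ve^{2})^{k/2} = h_\ve^{-k}h_\ve^{k} = 1$ to the Riemannian volume density, we obtain $dV_{M,g_\ve} = dV_{M,g}$, which is in particular independent of $\ve$. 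To control the spectral gap, I would observe that $g_\ve$ and $\hat g_\ve$ coincide in the fiber directions and in the conformal factors, and differ only through the replacement of $\hat g_B$ by $g_B$ in the base directions; since that replacement is applied \emph{inside} the same factor $h_\ve^{-2k/n}$, the two metrics are quasi-isometric with the same ratio $C$ as $g_B$ and $\hat g_B$, a constant independent of $\ve$. Proposition~\ref{quasi-isometric} then yields
\[
\sigma_1(M, g_\ve; \Sigma_{\St}\times F, \Sigma_{\Ne}\times F) \;\geq\; C^{-(2m+1)}\,\sigma_1(M, \hat g_\ve; \Sigma_{\St}\times F, \Sigma_{\Ne}\times F) \;\longrightarrow\; +\infty,
\]
which completes the proof; Theorem~\ref{thm:product} is recovered by taking $\Sigma_{\Ne} = \emptyset$ and $\Sigma_{\St} = \partial B$.

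The step I expect to be the main obstacle — in fact essentially the only delicate point — is making sure that the quasi-isometry constant relating $g_\ve$ to the product-collar metric $\hat g_\ve$ is uniform in $\ve$, since this uniformity is exactly what allows the blow-up of $\sigma_1$ to survive the passage from the auxiliary metric $\hat g_B$ back to a metric constructed over the prescribed base metric $g_B$. This works precisely because $g_\ve$ and $\hat g_\ve$ share the same warping function $h_\ve$ and the same fiber metric $g_F$, so that the (possibly large, $\ve$-dependent) conformal factor $h_\ve^{-2k/n}$ multiplies both base metrics identically and cannot amplify the fixed discrepancy between $g_B$ and $\hat g_B$.
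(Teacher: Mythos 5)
Your proposal is correct and follows essentially the same route as the paper: replace $g_B$ by a quasi-isometric metric with a product collar, apply Proposition~\ref{aux result} there, transplant the same warping function $h_\ve$ back over $g_B$ (which preserves the volume element and the metric near $\Sigma$), and transfer the eigenvalue blow-up via Proposition~\ref{quasi-isometric}, using exactly the observation that the quasi-isometry ratio is independent of $\ve$. The only cosmetic difference is that you spell out the collar interpolation and the cancellation $h_\ve^{-k}h_\ve^{k}=1$, which the paper leaves as ``easily checked.''
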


\begin{proof}
Since $B$ is compact, its Riemannian metric $g_B$ is quasi-isometric with ratio $C \geq 1$ to a Riemannian metric $g_B^\prime$, such that with respect to $g_B^\prime$, $\Sigma$ has a neighborhood isometric to $\Sigma \times [0,\ell)$. Proposition \ref{aux result} asserts that for any $\ve > 0$ there exists a positive $h=h_\eps \in C^{\infty}(B)$ with $h=1$ in a neighborhood of $\Sigma$, such that the first non-zero mixed Steklov-Neumann eigenvalue of $M$ with respect to the Riemannian metric $g_{\ve}^\prime = h^{-2k/n} g_{B}^\prime + h^2 g_{F}$ satisfies $\sigma_1(M,g_{\ve}^\prime;\Sigma_{\St} \times F , \Sigma_{\Ne} \times F) \rightarrow + \infty$ as $\ve \rightarrow 0$. It is easily checked that the Riemannian metric $g_{\ve} = h^{-2k/n} g_{B} + h^2 g_{F}$ is quasi-isometric to $g_{\ve}^\prime$ with ratio $C$, and that the volume  element of $g_{\ve}$ coincides with the volume element of the original Riemannian metric $g_B + g_F$ of $M$. We conclude from Proposition \ref{quasi-isometric} that 
$$\sigma_1(M,g_{\ve};\Sigma_{\St} \times F , \Sigma_{\Ne} \times F) \geq \sigma_1(M,g_{\ve}^\prime;\Sigma_{\St} \times F, \Sigma_{\Ne} \times F)/C^{2(n+k)+1} \rightarrow + \infty$$ as $\ve \rightarrow 0$.
\end{proof}

\noindent{\emph{Proof of Theorem \ref{thm:ProductBoundary}.} By assumption, the boundary $\Sigma$ of $M$ splits as the Riemannian product of two closed manifolds $F_1^m$ and $F_2^k$, where $m \geq k\geq 1$. It is evident that there exists $\ell > 0$ such that $\Sigma$ has a neighborhood diffeomorphic to $\Sigma \times [0,2\ell)$ and $ \ell \Vol(\Sigma,g_M)  < \Vol(M,g_M)/2$. Then there exists a Riemannian metric $g$ on $M$ such that $\Sigma$ has a neighborhood $K$ isometric to $\Sigma \times [0,\ell]$ and $\Vol(M,g) = \Vol(M,g_M)$. Indeed, extend the product metric from $\Sigma \times [0,\ell]$ to a Riemannian metric $\bar{g}$ of $M$. Choose $\vf \in C^\infty(M)$ with $\vf = 0$ in $\Sigma \times [0,\ell]$ and $\vf > 0$ in $M \smallsetminus (\Sigma \times [0,\ell])$, and consider the conformal Riemannian metric $\bar{g}_c = e^{c\vf} \bar{g}$, with $c \in \R$. Then the volume of $M$ with respect to $\bar{g}_c$ is given by
\[
\Vol(M,\bar{g}_c) = \int_{M} e^{c(m+k+1)\vf/2}\,dV_{M,\bar{g}}.
\]
It is now evident that $\Vol(M,\bar{g}_c)$ depends continuously on $c \in \R$, while $\Vol(M,\bar{g}_c) \rightarrow + \infty$ as $c \rightarrow +\infty$ and $\Vol(M,\bar{g}_c) \rightarrow \ell \Vol(\Sigma,g_M)  < \Vol(M,g_M)/2$ as $c \rightarrow - \infty$. The intermediate value theorem yields that there exists $c \in \R$ such that $g = \bar{g}_c$ satisfies the asserted properties.

It is clear that $(K,g)$ is the Riemannian product of $B = F_1 \times [0,\ell]$ with $F_2$ and $\dim(B) = m + 1 >\dim(F_2)$. It follows from Theorem \ref{thm:product mixed} that for any $\ve > 0$ there exists a Riemannian metric $g_{\ve}$ on $K$ which coincides with $g$ in a neighborhood of $\partial K = \Sigma \times \{0,\ell\}$ such that $\Vol(K,g_{\ve}) = \Vol(K,g)$ and $\sigma_{1}(K,g_{\ve};\Sigma \times \{0\} , \Sigma \times \{\ell\}) \rightarrow + \infty$ as $\ve \rightarrow 0$. Since $g_{\ve}$ coincides with $g$ in a neighborhood of $\Sigma \times \{\ell\}$ in $K$, we deduce that $g_{\ve}$ extended by $g$ outside $K$ is a Riemannian metric on $M$, which we denote also by $g_{\ve}$, satisfying $\Vol(M,g_{\ve}) = \Vol(M,g) = \Vol(M,g_M)$ and $g_{\ve} = g = g_M$ on $\Sigma$. Finally, we derive from Proposition \ref{diffusion first non-zero eigenvalue} that
\[
\sigma_1(M, g_\ve) = \min_{f} \int_M \| d f \|^2 \geq \min_{f} \int_K \| d f \|^2 = \sigma_1(K,g_\ve; \Sigma \times \{0\} , \Sigma \times \{\ell\}),
\]
where the minimum is taken over all $f \in C^\infty(M)$ satisfying $\int_\Sigma f^2 = 1$ and $\int_{\Sigma} f = 0$. This implies that $\sigma_1(M,g_\ve) \rightarrow + \infty$ as $\ve \rightarrow 0$, as we wished. \qed

\bibliographystyle{plain}
\bibliography{biblio}

\end{document}